\documentclass[12pt]{amsart}
\usepackage{amsfonts,amssymb,amsmath,amscd,amsthm, amstext,bm,color}
\usepackage[colorlinks, citecolor=blue,pagebackref,hypertexnames=false]{hyperref}
\usepackage{enumerate}
\usepackage{epstopdf}
\usepackage{fancyhdr}
\usepackage{geometry}
\usepackage{graphicx}
\usepackage{mathrsfs}
\usepackage{txfonts}
\usepackage{tikz}
\usepackage{url}

\usepackage{comment}
\geometry{letterpaper}
\allowdisplaybreaks

\setlength{\topmargin} {-0.4 cm}
 \oddsidemargin 4mm
 \evensidemargin -4mm
 \headsep=0.8cm
\headheight=13 pt
\textheight 228mm
 \textwidth 169mm
 \parskip=2.0pt
  \setlength{\lineskip}{1.6pt}
\hfuzz=6pt
\widowpenalty=10000

\newtheorem{theorem}{Theorem}[section]


\newtheorem{lemma}[theorem]{Lemma}

\newtheorem{remark}[theorem]{Remark}

\newtheorem{definition}[theorem]{Definition}

\numberwithin{equation}{section}



\begin{document}


\title{The fractional Riesz transform and their commutator in Dunkl setting}

\author[Y.P. Chen, X.T. Han and L.C. Wu]{Yanping Chen,
 Xueting Han$^*$,
 Liangchuan Wu
}

\thanks{$^*$ Corresponding author}

\address{Yanping Chen, Department of Mathematics, Northeastern University, Shenyang, 110004, Liaoning, China}
\email{yanpingch@126.com}

\address{Xueting Han, School of Mathematics and Physics, University of Science and Technology Beijing, Beijing, 100083, China}
\email{hanxueting12@163.com}

\address{Liangchuan Wu, School of Mathematical Science, Anhui University, Hefei, 230601, Anhui, China}
\email{wuliangchuan@ahu.edu.cn}

\subjclass[2010]{Primary: 42B35.
Secondary: 43A85,  42B20}

\keywords{Dunkl setting,
fractional Riesz transforms, BMO, VMO}

\thanks{
}


\begin{abstract}
In this paper, we study the boundedness of the fractional Riesz transforms
in the Dunkl setting. Moreover, we establish the necessary and sufficient
conditions for the boundedness of their commutator with respect to the central BMO
space associated with Euclidean metric and the BMO space associated with Dunkl metric, respectively.
Based on this, we further characterize the compactness of the commutator in terms of  the corresponding types of
VMO spaces.
\end{abstract}

\maketitle

\section{Introduction}

Fourier transform in $\mathbb R^N$
$$
    \hat{f}(\xi)=\int_{\mathbb{R}^N}f(x)e^{\langle x,-i\xi\rangle} dx
$$
 plays a crucial role in classic analysis, especially providing a powerful tool in the study of the Riesz transforms. The classic Riesz transforms $\mathfrak{R}_j $, $j=1,\ldots, N$,
can be expressed in the frequency domain as Fourier multipliers:
$$
    \widehat{\mathfrak{R}_j f}(\xi) = i \frac{\xi_j}{\|\xi\|} \widehat{f}(\xi),
$$
where $\|\cdot\|$ denotes the Euclidean norm in $\mathbb{R}^N$.
This representation reduces the analysis of their \( L^p \)-boundedness to verify the boundedness of the symbol \( i \xi_j / \|\xi\| \) due to the Plancherel theorem.
Furthermore, the Fourier transform reveals deep connections between Riesz transforms and derivatives, highlighting their vital role in the study of Sobolev spaces, Hardy spaces, and elliptic partial differential equations (see \cite{St2}).

The classical fractional Riesz transform, defined as
$$
    \mathfrak{R}_j^{\alpha} f(x) = c_{N,\alpha} \,  \int_{\mathbb{R}^N} \frac{y_j}{\|y\|^{N-\alpha+1}} f(x-y) \,dy
$$
for $0<\alpha<N$, has been extensively studied.
Especially,
 the vector of the fractional Riesz transform
$$
    \mathfrak{R}^{\alpha} =\big\{\mathfrak{R}_1^{\alpha},
    \mathfrak{R}_2^{\alpha},\ldots,\mathfrak{R}_N^{\alpha}\big\}
$$
appears in the generalized surface quasi-geostrophic (SQG) equation:
$$
\begin{cases}
w_{t}+u \cdot \nabla w=0, \quad(x, t) \in \mathbb{R}^{2} \times  \mathbb{R}^{+},\\
u=\nabla^{\perp}(-\Delta)^{-1+\beta} w, \\
w(x, 0)=w_{0},
\end{cases}
$$
where $0 \leq \beta\leq \frac{1}{2}$. The unknown functions $w=w(x, t)$ and $u=u(x, t)=\left(u_{1}(x, t), u_{2}(x, t)\right)$ related by the second equation in the above equations can be expressed as
\begin{equation}\label{eq13}
    u(x)=\bigg(-\int_{\mathbb{R}^{2}} \frac{y_2}{\|y\|^{2+2 \beta}} w(x-y) \,d y,\   \
    \int_{\mathbb{R}^{2}} \frac{y_1}{\|y\|^{2+2 \beta}} w(x-y) \,d y\bigg).
\end{equation}
For  $0<\beta<1 / 2$, \eqref{eq13} is completely similar to
$\mathfrak{R}^{1-2\beta} =\big \{\mathfrak{R}_1^{1-2\beta},\mathfrak{R}_2^{1-2\beta}\big \}$ with $N=2$.
The two-weight inequalities of $\mathfrak{R}^{\alpha}$ were characterized by Lacey, Sawyer, Wick et al. in \cite{LaWi13, SaShUr2016, SaShUr2020, SaWi2025}.

Along with the introduction of a parallel theory to the Fourier transform, the Dunkl transform, another fundamental tool has been developed on Euclidean spaces over the past several decades
 (see for example \cite{ABFR, AH, ADH, AT, BCG, BCV, BR, Dzi2016, DA2023, DH2022}).
The Dunkl transform was introduced by Dunkl \cite{Dunkl} under the action of a reflection group. Specifically, the reflection $\sigma_\alpha$ with respect to the hyperplane
orthogonal to a nonzero vector $\alpha$ is given by
$$
    \sigma_\alpha (x)=x-2\frac{\langle x,\alpha\rangle}{\|\alpha\|^2}\alpha.
$$
A finite set $R\subseteq\mathbb{R}^N\setminus\{0\}$ is called a root system if $\sigma_\alpha (R)=R$ for every $\alpha\in R$. We shall consider normalized root systems in this work, meaning that $\langle\alpha,\alpha\rangle=2$ for all $\alpha\in R$.
The finite group $G$ generated by the set of reflections $\{\sigma_\alpha: \alpha \in R\}$, where
 $\sigma_\alpha(x)=x-\langle\alpha, x \rangle\alpha$ for any $ x\in \mathbb{R}^N$,
 is called the reflection group of the root system. For any $x\in\mathbb{R}^N$, we denote by
$$
    \mathcal{O}(x)=\big\{\sigma (x): \sigma\in G\big\}
$$
 the $G$-orbit of the point $x$.
Then the Dunkl metric $d$, which denotes the distance between two $G$-orbits $\mathcal O(x)$ and $\mathcal O(y)$, is defined by
$$
    d(x,y):=\min_{\sigma \in G} \|x-\sigma(y)\|.
$$
 It is straightforward to see $d(x,y)\leq\|x-y\|$.

 Given a root system $R$ and a fixed multiplicity function  $\kappa$ (defined on $R$) which is a nonnegative $G$-invariant function, the $G$-invariant homogeneous weight function $h_{\kappa}$ is defined as
$$
    h_{\kappa}(x)=\prod_{\alpha\in R}|\langle \alpha, x\rangle|^{\kappa(\alpha)}.
$$
The associated Dunkl measure is then given by
$$
    d\omega(x):=h_{\kappa}(x)dx=\prod_{\alpha\in R}|\langle \alpha, x\rangle|^{\kappa(\alpha)}
dx.
$$
Let $\gamma_{\kappa}=\sum_{\alpha\in R}\kappa(\alpha)$, $\mathbf{N}=N+\gamma_{\kappa}$ is the homogeneous dimension associated with the Dunkl setting.

The Dunkl transform is defined by
$$
    \mathcal{F}_{\kappa} f(\xi)=c_{\kappa}^{-1}\int_{\mathbb{R}^N}f(x)E(x,-i\xi) d\omega(x).
$$
Here, $\displaystyle  c_{\kappa}=\int_{\mathbb{R}^N}e^{-\|x\|^2/2}d\omega(x)$ and the function $E(x,y)$ on
$\mathbb{C}^N \times \mathbb{C}^N$
is the Dunkl kernel which generalizes the exponential function $e^{\langle x,y\rangle}$ in the Fourier transform.
 There also exists a Dunkl translation $\tau$
 which serves as an analogue to the ordinary translation $\tau_x f(\cdot)=f(\cdot-x)$.

Thangavelu and Xu \cite{TX2007} introduced the Riesz transforms $R_j, \, j=1,2, \ldots, N$, in the Dunkl setting.
It was shown to be a multiplier operator via the Dunkl transform
and  (for $N=1$) is bounded on $L^p(\mathbb{R}, d\omega)$ for $1<p<\infty$  (see \cite[Theorems 5.3 and  5.5]{TX2007}).  This boundedness was later
extended to $L^p(\mathbb{R}^N, d\omega)$ in \cite{AS}.

In this article, we focus on the fractional Riesz transform in the Dunkl setting,  defined via the Dunkl translation:
\begin{equation}\label{Riesztrans}
    R_j^{\alpha}(f)(x)=d_{\kappa,\alpha}   \int_{ \mathbb{R}^N}  \tau_x(f)(-y) \frac{y_j}{\|y\|^{\mathbf{N}+1-\alpha}} d \omega(y),
    \quad x \in \mathbb{R}^N,
\end{equation}
where $\displaystyle  d_{\kappa,\alpha}=2^{\frac{\mathbf{N}-\alpha}{2}} \Gamma\Big(\frac{\mathbf{N}+1-\alpha}{2}\Big )/\Gamma\Big (\frac{1+\alpha}{2}\Big )$ and $0\leq \alpha<\mathbf{N}$.
In fact, $R_j^{\alpha}$ is a convolution operator and can also be defined as
$$
    \mathcal{F}_{\kappa}(R_j^{\alpha} f)(\xi)=-i\frac{\xi_j}{\|\xi\|^{1+\alpha}} \mathcal{F}_{\kappa}(f)(\xi).
$$
Note that $R_j^\alpha$ reduces to the Dunkl Riesz transform $R_j$ when $\alpha =0$.

Our first goal is to give the $(L^p, L^q)$-boundedness for $R_j^{\alpha}$ for functions $f \in L^p(\mathbb{R}^N,d\omega)$ with $\displaystyle  \frac{1}{q}=\frac{1}{p}-\frac{\alpha}{\mathbf{N}}$ when $0<\alpha<N$. The result is stated as follows.

\begin{theorem}\label{operatorbound}
 Given $0<\alpha<N$ and $\displaystyle  1<p<\frac{\mathbf{N}}{\alpha}$.
 Let $\displaystyle  \frac{1}{q}=\frac{1}{p}-\frac{\alpha}{\mathbf{N}}$.
 Then $ R_j^{\alpha}$ is bounded from $L^p(\mathbb{R}^N, d \omega)$ to $L^q(\mathbb{R}^N, d \omega)$ with
$$
    \|R_j^{\alpha}f\|_{L^q(\mathbb{R}^N, d \omega) } \lesssim  \|f\|_{L^p(\mathbb{R}^N, d \omega) }.
$$
\end{theorem}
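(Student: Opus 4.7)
The cleanest route is to factor $R_j^\alpha$ through the Dunkl Riesz potential. In the Dunkl multiplier picture,
\[
    -i\frac{\xi_j}{\|\xi\|^{1+\alpha}} = \Big(-i\frac{\xi_j}{\|\xi\|}\Big)\cdot\|\xi\|^{-\alpha},
\]
so on a dense class of test functions one has the identity $R_j^\alpha = R_j\circ I_\alpha = I_\alpha\circ R_j$, where $I_\alpha$ is the Dunkl fractional integral (Riesz potential) determined by the multiplier $\|\xi\|^{-\alpha}$. Since the classical Dunkl Riesz transform $R_j$ is already known to be bounded on $L^q(\mathbb{R}^N,d\omega)$ for every $1<q<\infty$ by \cite{AS}, the whole theorem reduces to the Hardy--Littlewood--Sobolev (HLS) inequality for $I_\alpha$:
\[
    \|I_\alpha f\|_{L^q(\mathbb{R}^N,d\omega)} \lesssim \|f\|_{L^p(\mathbb{R}^N,d\omega)},\qquad \tfrac{1}{q}=\tfrac{1}{p}-\tfrac{\alpha}{\mathbf{N}},\ 1<p<\tfrac{\mathbf{N}}{\alpha}.
\]

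\textbf{Execution.} First I would verify the factorization on, say, the Dunkl-Schwartz class by comparing multipliers and then extend by density, keeping in mind that $I_\alpha f$ has to be interpreted as a convergent Dunkl convolution against the radial kernel $c\|\cdot\|^{\alpha-\mathbf{N}}$. Because $\|y\|^{\alpha-\mathbf{N}}$ is a radial nonnegative function and the Dunkl translation of nonnegative radial functions is itself a nonnegative integral against a probability measure (R\"osler's product formula), the integral defining $I_\alpha|f|(x)$ is pointwise nonnegative and dominates $|I_\alpha f(x)|$. Second, I would prove the HLS bound for $I_\alpha$ via Hedberg's trick in the Dunkl setting: split the defining integral at a radius $r$ depending on $x$, estimate the local piece by the Dunkl Hardy--Littlewood maximal function $\mathcal{M}f(x)$ (using the doubling of $d\omega$ in the Dunkl ball system) and the global piece by H\"older against $\|f\|_p$. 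Optimizing $r$ yields the pointwise majorization
\[
    |I_\alpha f(x)| \lesssim \mathcal{M}f(x)^{1-\alpha p/\mathbf{N}}\,\|f\|_p^{\alpha p/\mathbf{N}},
\]
whence the $L^p\to L^q$ bound follows from the known $L^p$-boundedness of $\mathcal{M}$ on $(\mathbb{R}^N,d\omega)$. Combining with Step 1 gives the theorem.

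\textbf{Main obstacle.} The delicate part is the pointwise bound for $I_\alpha$, where one has to pass from the multiplier definition to a usable integral representation and then control the local integral by $\mathcal{M}f$. The subtlety is that the Dunkl convolution is not given by ordinary translation, so the standard change of variables has to be replaced by an orbit-averaged estimate: one uses the nonnegativity of $\tau_x(\|\cdot\|^{\alpha-\mathbf{N}})(-y)$ and the support/size property of this translated kernel (it is essentially concentrated on $\{y:d(x,y)\lesssim r\}$ after the split at scale $r$ and integrates to a quantity comparable to $r^\alpha$ on such a set, thanks to the $\mathbf{N}$-homogeneity of $d\omega$ under dilations). Once this orbit-averaging is done cleanly, Hedberg's argument runs exactly as in the Euclidean case, and the restriction $0<\alpha<N$ only enters through the requirement that $\mathbf{N}+1-\alpha>0$ so that the defining kernel in (\ref{Riesztrans}) is locally integrable against $d\omega$ away from the origin.
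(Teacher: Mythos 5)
Your proposal is correct in spirit and reaches the theorem by a genuinely different route than the paper. The paper does not factor $R_j^\alpha$ through the Riesz potential; instead it first proves pointwise size and H\"older estimates for the kernel $R_j^\alpha(x,y)$ itself (Lemma~\ref{smoothness}), derived from the subordination formula \eqref{eqFRT} and the Gaussian heat-kernel bounds of Lemma~\ref{theoremGauss}, obtaining
$\left|R_j^\alpha(x,y)\right|\lesssim d(x,y)^\alpha/\omega(B(x,d(x,y)))$, and then runs Hedberg's argument directly on $|R_j^\alpha f|$: the local piece is controlled by $R^\alpha\sum_{\sigma\in G}Mf(\sigma(x))$, the far piece by $R^{-\mathbf N/q}\|f\|_p$, and optimization in $R$ gives the pointwise majorization leading to the $(L^p,L^q)$ bound. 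Your factorization $R_j^\alpha=R_j\circ I_\alpha$ is legitimate on the multiplier level and your sketch of the Dunkl HLS bound for $I_\alpha$ via the same orbit-averaged Hedberg device is sound; the practical difference is what each version produces along the way. The paper's direct kernel estimates (and their H\"older counterparts \eqref{smoothcondi1}--\eqref{smoothcondi2}) are precisely what is needed for the commutator upper bound, the lower bound via Lemma~\ref{kernellowerbound}, and the compactness arguments in Sections~\ref{proofcomubound}--\ref{proofcomucomp}, so proving them first is not wasted effort. Your route, by contrast, makes the $L^q$ boundedness of the Dunkl Riesz transform $R_j$ (from \cite{AS}) an input, and would still require you to establish a pointwise kernel bound for $I_\alpha$ essentially identical to \eqref{sizecondi}; it does not shortcut the hard step, it relocates it. If one's only goal were Theorem~\ref{operatorbound}, the two approaches are of comparable length; within the paper's larger program the direct kernel estimate is more economical. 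One small caveat in your execution: to justify the factorization rigorously on a dense class and pass to general $L^p$ you must handle the singularity of the multipliers at $\xi=0$ (e.g.\ work on Schwartz functions whose Dunkl transform vanishes near the origin, then extend using the a~priori bounds on each factor), a point worth making explicit.
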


We continue to consider the commutator of the fractional Dunkl Riesz transform, which is defined by
$$
    [b, R_j^{\alpha}](f)(x)=b(x) R_j^{\alpha}(f)(x)-R_j^{\alpha}(b f)(x)
$$
for functions $b\in L_{l o c}^1(\mathbb{R}^N, d \omega)$.

To investigate the properties of these commutators, we introduce certain types of spaces of bounded mean oscillation in the Dunkl setting. A function $b\in L_{l o c}^1(\mathbb{R}^N, d \omega)$ is said to belong to the
$\mathrm{BMO}_{\text{Dunkl}}$ space if its norm satisfies
\begin{align}\label{eqBMODef}
    \|b\|_{\mathrm{BMO}_{\text {Dunkl}}}=\sup _{B\subseteq \mathbb{R}^N} \frac{1}{\omega(B)} \int_{B}|b(x)-b_{B}| d \omega(x)<\infty
\end{align}
with the supremum taking over all the Euclidean balls, and
$$
    b_{B}=\frac{1}{\omega(B)} \int_{B} b(x) d \omega(x).
$$
We also define the $\mathrm{BMO}_d$ space associated with the Dunkl metric $d$ by the set of the functions
$b\in L_{loc}^1(\mathbb{R}^N, d \omega)$ satisfying
$$
    \|b\|_{\mathrm{BMO}_d}=\sup _{B \in \mathbb{R}^N}
    \frac{1}{\omega(\mathcal{O}(B))} \int_{\mathcal{O}(B)}|b(x)-b_{\mathcal{O}(B)}| d \omega(x)<\infty,
$$
where $\mathcal{O}(B)$ denotes the Dunkl ball:
$$
    \mathcal{O}(B(x,r)):=\big\{y\in \mathbb{R}^N:d(y,x)<r\big\}.
$$
Note that $\mathrm{BMO}_d\subsetneqq \mathrm{BMO}_{\text{Dunkl}}$ (see \cite{JL2023}).

Han et al. \cite{HLLW2023} established  the $(L^p, L^q)$-boundedness for the commutator of the fractional operator when $b\in\mathrm{BMO}_{\text{Dunkl}}$. In \cite{HLLW}, the same authors studied the lower and the upper bounds of the commutator of the Dunkl Riesz transform with respect to the $\mathrm{BMO}_{\text{Dunkl}}$ and $\mathrm{BMO}_d$ spaces, respectively. Moreover, they characterized the compactness of these commutators in terms of
 two types of vanishing mean oscillation spaces, specifically the subspaces of $\mathrm{BMO}_{\text{Dunkl}}$ and $\mathrm{BMO}_d$ spaces. It should be addressed that the authors of \cite{HLLW} achieved these results by establishing the pointwise smoothness estimates for the kernel of the Dunkl Riesz transform.

Motivated by this, we will provide the upper bound for the commutator $[b, R_j^{\alpha}]$
via $\mathrm{BMO}_d$ space. To describe its lower bound, we introduce a subspace of $\mathrm{BMO}_{\text{Dunkl}}$, called the central BMO space and denoted by $\mathrm{CBMO}_{\text{Dunkl}}$. Concretely,
\begin{align*}
  \mathrm{CBMO}_{\text{Dunkl}}&=\big\{b\in \mathrm{BMO}_{\text{Dunkl}}:  \text{the supremum in \eqref{eqBMODef} is taken over  }   \big.\\
 &\hspace{3.5cm}\big. \text{all Euclidean balls } B\subseteq \mathbb{R}^N  \text{ that contain the origin $0$}\big\}.
\end{align*}
We now present our results as follows.

\begin{theorem}\label{commutatorbound}
Given $0<\alpha<N$ and $\displaystyle  1<p<\frac{\mathbf{N}}{\alpha}$.
 Let $\displaystyle  \frac{1}{q}=\frac{1}{p}-\frac{\alpha}{\mathbf{N}}$.
 Suppose $b \in L_{l o c}^1(\mathbb{R}^N, d \omega)$. Consider the commutator $[b, R_j^{\alpha}]$. Suppose $b \in \mathrm{BMO}_d$. Then $[b, R_j^{\alpha}]$ is bounded from  $L^p(\mathbb{R}^N, d \omega)$ to to $L^q(\mathbb{R}^N, d \omega)$, with
$$
\big \|[b, R_j^{\alpha}]\big\|_{L^p(\mathbb{R}^N, d \omega) \rightarrow L^q(\mathbb{R}^N, d \omega)} \lesssim\|b\|_{\mathrm{BMO}_d}.
$$

Conversely, if $[b, R_j^{\alpha}]$ is bounded from $L^p(\mathbb{R}^N, d \omega)$ to $L^q(\mathbb{R}^N, d \omega)$, then $b \in \mathrm{CBMO}_{\text {Dunkl}}$ with
$$
\|b\|_{\mathrm{CBMO}_{\text {Dunkl}}} \lesssim\big\|[b, R_j^{\alpha}]\big \|_{L^p(\mathbb{R}^N, d \omega) \rightarrow L^q (\mathbb{R}^N, d \omega)}
$$
\end{theorem}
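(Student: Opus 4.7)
The plan is to adapt the strategy used in \cite{HLLW} for the Dunkl Riesz transform ($\alpha=0$) to the fractional case. For the sufficiency direction, the heart of the matter is to establish pointwise Calderón--Zygmund type size and smoothness estimates for the kernel
$$
K_j^{\alpha}(x,y) = d_{\kappa,\alpha}\,\tau_x\!\left(\frac{(\cdot)_j}{\|\cdot\|^{\mathbf N+1-\alpha}}\right)(-y)
$$
of $R_j^{\alpha}$ with respect to the Dunkl metric $d$, namely an $\omega$-normalized size bound of order $d(x,y)^{\alpha}/\omega(\mathcal O(B(x,d(x,y))))$ and Hölder regularity in the second variable. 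These would be derived by extending the translation and weight estimates in \cite{HLLW}, together with the explicit integral formula for the Dunkl translation applied to radial functions, to the fractional exponent $\mathbf N+1-\alpha$.

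Once such kernel estimates are in place, I would write
$$
[b,R_j^{\alpha}](f)(x) = \int_{\mathbb R^N} K_j^{\alpha}(x,y)\,(b(x)-b(y))\,f(y)\,d\omega(y),
$$
fix a Dunkl ball $\mathcal O(B)$, and decompose $f = f\chi_{\mathcal O(2B)} + f\chi_{\mathcal O(2B)^c}$. For the local piece, replace $b$ by $b-b_{\mathcal O(B)}$ and use Theorem \ref{operatorbound} together with the John--Nirenberg inequality for $\mathrm{BMO}_d$ (valid since Dunkl balls form a space of homogeneous type). For the global piece, Dunkl-annular decomposition combined with the kernel smoothness yields a geometric series controlled by $\|b\|_{\mathrm{BMO}_d}$. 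Patching the two pieces via a sharp-maximal-function argument (or a sparse bound in the Dunkl-metric sense) gives the $L^p\to L^q$ bound.

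For the necessity direction, I would run a Coifman--Rochberg--Weiss / Uchiyama-style test-function argument, but tailored to Euclidean balls $B=B(x_0,r)$ containing the origin. For such $B$, one has $\|x_0\|<r$, so all reflections $\sigma(x_0)$ lie in $B(0,2r)$, and one can choose a comparison ball $\widetilde B$ translated in the $e_j$-direction so that for $(x,y)\in B\times\widetilde B$ the vector $y-x$ has $(y-x)_j$ bounded below, $\|y-x\|\sim d(x,y)\sim r$, and the Dunkl translation integral representation shows that $K_j^{\alpha}(x,y)$ is comparable to a positive constant of size $r^{\alpha-\mathbf N-1}\omega(B)^{-1}\cdot\omega(B)\cdot r^{\alpha-\mathbf N-1}$-order up to natural normalization. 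Testing $[b,R_j^{\alpha}]$ against $f=\mathrm{sgn}(b-b_B)\chi_{\widetilde B}$ (or a smoothed variant) and integrating over $B$ isolates $\int_B|b-b_B|\,d\omega$, and Hölder against the assumed $L^p\to L^q$ operator bound, combined with $1/q=1/p-\alpha/\mathbf N$, yields $\|b\|_{\mathrm{CBMO}_{\text{Dunkl}}}\lesssim\|[b,R_j^{\alpha}]\|_{L^p\to L^q}$.

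The main obstacle I anticipate is locating $\widetilde B$ so that the non-positive Dunkl translation still produces a kernel with a definite sign and a matching lower bound. For balls not containing the origin the $G$-orbit contributions can destructively cancel, which is exactly why the extracted condition is the smaller class $\mathrm{CBMO}_{\text{Dunkl}}$ rather than $\mathrm{BMO}_{\text{Dunkl}}$; the condition $0\in B$ gives $B\subseteq \mathcal O(B)\subseteq B(0,2r)$ and allows one to dominate the spurious orbit terms by the principal one. A secondary technical point is upgrading the $\alpha=0$ kernel smoothness of \cite{HLLW} to the fractional range $0<\alpha<N$ uniformly in $\alpha$, which requires a careful rescaling of the weight estimates on Dunkl balls.
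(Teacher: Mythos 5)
Your overall structure matches the paper's: for sufficiency, kernel size/smoothness estimates plus a sharp-maximal-function argument with a local/global split and a John--Nirenberg equivalence for $\mathrm{BMO}_d$; for necessity, a Coifman--Rochberg--Weiss style test on a pair of separated balls $B,\widetilde B$ with a pointwise lower bound on the kernel. The paper replaces your ``$\mathrm{sgn}(b-b_B)$'' test function by a median-value splitting (defining $m_b(\widetilde B_0)$, then sets $E_1,E_2\subseteq\widetilde B_0$ and $B_1,B_2\subseteq B_0$ so that $b(x)-b(y)$ has a fixed sign on $B_i\times E_i$), and tests against $f_i=\mathsf 1_{E_i}$; this is a cleaner variant of your idea and both routes are viable.

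However, your explanation for why only $\mathrm{CBMO}_{\text{Dunkl}}$ (rather than all of $\mathrm{BMO}_{\text{Dunkl}}$) comes out of the lower bound is wrong, and this is the one genuinely subtle point in the theorem. You attribute the restriction to the origin-containing balls to destructive cancellation among the $G$-orbit contributions in the Dunkl translation, so that a positive kernel lower bound would fail otherwise. In fact the kernel lower bound of Lemma \ref{kernellowerbound} holds for every Euclidean ball $B=B(x_0,r)$, with no requirement that $0\in B$. The restriction to central balls enters only at the very last step: after applying H\"older and the assumed $(L^p,L^q)$ bound one is left with the factor $r^{-\alpha}\,\omega(B_0)^{1/p-1/q}=r^{-\alpha}\,\omega(B_0)^{\alpha/\mathbf N}$, and one needs this to be bounded by a constant. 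This requires $\omega(B_0)\approx r^{\mathbf N}$, which by \eqref{ballmeasuresim} is generally false (the Dunkl measure of a Euclidean ball is only bounded \emph{below} by $r^{\mathbf N}$ and can be much larger for balls far from the singular hyperplanes). The hypothesis $0\in B_0$ gives $\|x_0\|<r$, and then \eqref{growth}, the scaling property, and \eqref{equival vollum} yield $\omega(B_0)\approx r^{\mathbf N}\omega(B(0,1))$, which is exactly what is needed. This is the measure-scaling issue the authors flag in the introduction (``the Dunkl measure of an Euclidean ball cannot always be characterized by its homogeneous dimension''), not a sign/cancellation issue; in the $\alpha=0$ case of \cite{HLLW} the factor $r^{-\alpha}\omega(B_0)^{\alpha/\mathbf N}$ is simply absent and one gets the full $\mathrm{BMO}_{\text{Dunkl}}$ lower bound. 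Your intermediate claim that the kernel lower bound is ``comparable to a positive constant of size $r^{\alpha-\mathbf N-1}\omega(B)^{-1}\cdot\omega(B)\cdot r^{\alpha-\mathbf N-1}$-order'' is also garbled; the correct bound from Lemma \ref{kernellowerbound} is $|R_j^\alpha(x,y)|\gtrsim r^{\alpha}/\omega(B(x_0,r))$ for $(x,y)\in B\times\widetilde B$.
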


 With the characterization of the boundedness of the commutators in hand, we tend to explore their additional properties. In particular, we study sufficient and necessary conditions for the compactness of the commutators using the vanishing mean oscillation spaces $\mathrm{VMO}_d$ and $\operatorname{CVMO}_{\text{Dunkl}}$ which are the subspaces of $\mathrm{BMO}_{d}$ with the Dunkl metric and $\mathrm{CBMO}_{\text{Dunkl}}$ space with Euclidean metric, respectively.

\begin{theorem}\label{commutatorcompact}
Given $0<\alpha<N$ and $\displaystyle 1<p<\frac{\mathbf{N}}{\alpha}$.
 Let $\displaystyle \frac{1}{q}=\frac{1}{p}-\frac{\alpha}{\mathbf{N}}$. If $b \in \mathrm{VMO}_d$. Then $[b, R_j^{\alpha}]$ is compact from $L^p(\mathbb{R}^N, d \omega)$ to $L^q(\mathbb{R}^N, d \omega)$. If $[b, R_j^{\alpha}]$ is compact from $L^p(\mathbb{R}^N, d \omega)$ to $L^q(\mathbb{R}^N, d \omega)$. Then $b \in \operatorname{CVMO}_{\text {Dunkl }}$.
\end{theorem}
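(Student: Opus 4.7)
The plan is to prove the two implications separately, in each case leveraging Theorem \ref{commutatorbound} for the norm estimate and adapting to the fractional setting the strategy used in \cite{HLLW} for the case $\alpha=0$.

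\textbf{Sufficiency.} Given $b\in \mathrm{VMO}_d$, I would exhibit $[b, R_j^\alpha]$ as an operator-norm limit of compact operators. By the definition of $\mathrm{VMO}_d$ as the $\mathrm{BMO}_d$-closure of a suitable dense subclass (for instance $C_c^\infty(\mathbb R^N)$, possibly after a $G$-symmetrisation step so that oscillations over the orbits $\mathcal O(B)$ vanish), choose smooth compactly supported $b_n$ with $\|b-b_n\|_{\mathrm{BMO}_d}\to 0$. Theorem \ref{commutatorbound} then gives
$$
\big\|[b-b_n, R_j^\alpha]\big\|_{L^p\to L^q}\lesssim \|b-b_n\|_{\mathrm{BMO}_d}\longrightarrow 0,
$$
so compactness of $[b, R_j^\alpha]$ reduces to compactness of each $[b_n, R_j^\alpha]$. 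For the latter I would invoke the Fr\'echet--Kolmogorov--Riesz criterion in $L^q(\mathbb R^N, d\omega)$, verifying three things on the unit ball of $L^p$: uniform $L^q$-boundedness (from Theorem \ref{operatorbound} together with $b_n\in L^\infty$), tightness at infinity (using that $b_n$ has compact support together with pointwise decay of the kernel in \eqref{Riesztrans}), and Euclidean equicontinuity under translation, for which a quantitative smoothness estimate on the kernel of $R_j^\alpha$ in the style of \cite{HLLW} is needed, with the extra factor $\|y\|^\alpha$ absorbed into the weight.

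\textbf{Necessity.} Suppose that $[b, R_j^\alpha]$ is compact from $L^p$ to $L^q$. I would prove $b\in \mathrm{CVMO}_{\text{Dunkl}}$ by contradiction. If $b\notin \mathrm{CVMO}_{\text{Dunkl}}$, there exists a sequence of Euclidean balls $B_k \ni 0$ whose radii tend to $0$, tend to $\infty$, or stay bounded away from both, such that the mean oscillation of $b$ on $B_k$ is bounded below by some $c_0>0$. Mimicking the lower-bound construction underlying the second half of Theorem \ref{commutatorbound}, I would pick normalised test functions $f_k\in L^p(\mathbb R^N, d\omega)$ supported near $B_k$ together with companion functions on a well-separated far region, engineered so that
$$
\big\|[b, R_j^\alpha] f_k\big\|_{L^q(\mathbb R^N, d\omega)}\gtrsim c_0 .
$$
Because the $B_k$ shrink, blow up, or escape, the sequence $\{f_k\}$ can be arranged to converge weakly to zero in $L^p$; compactness of $[b, R_j^\alpha]$ would then force $\|[b, R_j^\alpha] f_k\|_{L^q}\to 0$, a contradiction.

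\textbf{Main obstacle.} The most delicate point is Euclidean equicontinuity in the sufficiency step. Dunkl translation $\tau_x$ is not an ordinary translation, and the kernel of $R_j^\alpha$ is naturally smooth only across $G$-orbits, not across single Euclidean points. One must therefore first prove a Dunkl-metric H\"older-type regularity estimate for this kernel (extending the one used for $R_j$ in \cite{HLLW} to the fractional exponent) and then transfer it to Euclidean equicontinuity of $[b_n, R_j^\alpha] f$, exploiting that $b_n$ is Euclidean-smooth and compactly supported. A secondary technical nuisance is that $\mathrm{VMO}_d$ and $\mathrm{CVMO}_{\text{Dunkl}}$ refer to different metrics and different families of balls, so some care is required to ensure that the chosen smooth approximants of $b$ indeed have vanishing mean oscillation on Dunkl balls $\mathcal O(B)$ and not merely on Euclidean ones.
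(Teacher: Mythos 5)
Your high-level plan for both directions matches the paper's strategy, but there are two substantive points worth flagging.

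\textbf{Sufficiency.} You correctly identify the density-plus-Riesz--Kolmogorov scheme and the reduction via Theorem~\ref{commutatorbound}, but you misread what the Riesz--Kolmogorov criterion on a space of homogeneous type actually asks for. The G\'orka--Macios version (Theorem~\ref{RieszKolmogorov}, used in the paper) replaces translation equicontinuity by the averaging condition
$$\lim_{r\to 0}\int_X \big|g(x)-g_{B(x,r)}\big|^q\,d\mu(x)=0,$$
so no Euclidean (or Dunkl) translation is ever applied to $[b,R_j^\alpha]f$. The paper verifies this by fixing $z\in B(x,r)$, splitting $[b,R_j^\alpha]f(x)-[b,R_j^\alpha]f(z)$ into four pieces according to whether $d(x,y)$ is larger or smaller than $\varepsilon^{-1}\|x-z\|$, and then using the size and H\"older estimates of Lemma~\ref{smoothness} together with the Lipschitz bound on $b$. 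Your ``main obstacle'' — transferring Dunkl-metric kernel smoothness into Euclidean translation equicontinuity — therefore does not arise; the criterion was designed precisely to avoid it. A second, smaller point: the paper approximates $b$ by $\Lambda_{d,0}(\mathbb R^N)$ (compactly supported Lipschitz functions, dense in $\mathrm{VMO}_d$ under the $\mathrm{BMO}_d$ norm), not by $C_c^\infty$ with an ad hoc $G$-symmetrisation; the former is the natural dense class here, and no symmetrisation step is needed.

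\textbf{Necessity.} Your setup (contradiction, balls $B_k\ni 0$ with oscillation $\gtrsim\delta_0$, normalized test functions localized near companion balls) is exactly the paper's. The paper constructs $f_i=\mathsf 1_{\tilde F_{i,1}}\,\omega(B_i)^{-1/p}$ with \emph{disjoint} supports (after a lacunary selection $10\,r_{l+1}\le r_l$, and its analogues in the other two cases) and shows $\|[b,R_j^\alpha]f_i\|_q\gtrsim\delta_0$. At that point the paper follows Lacey--Li: extract $\psi$ as a subsequential limit of $[b,R_j^\alpha]f_{i_k}$, introduce a weight sequence $\{c_k\}$ in $\ell^q\cap\ell^{q'}\setminus\ell^1$, and derive a contradiction between $\sum_k c_k\psi\in L^q$ and $\sum_k c_k=\infty$. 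You instead propose to observe that the $f_k$ converge weakly to $0$ in $L^p$ (which is true, because they are $L^p$-normalized with pairwise disjoint supports and $1<p<\infty$ is reflexive), so that compactness forces $\|[b,R_j^\alpha]f_k\|_q\to 0$, contradicting the lower bound directly. This is a valid and somewhat shorter alternative; the one gap in your write-up is that you do not justify the weak-null convergence, which rests on precisely the disjoint-support construction that the paper makes explicit. Make that step concrete and your necessity argument goes through.
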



This paper thoroughly explores the convolution kernels of the fractional Riesz transforms and offers their pointwise lower and upper size estimates, as well as smoothness conditions, employing the method in \cite{HLLW}. These estimates not only derive the boundedness of the fractional Riesz transforms and their commutators, but also provide the tool for further investigations into compactness properties.
Moreover, considering that the Dunkl measure of an Euclidean ball cannot always be characterized by its homogeneous dimension, we choose the central Dunkl BMO space related to the Euclidean norm to investigate the lower bound of the commutator. It remains challenging to extend this subspace to the entire Dunkl BMO space associated with both the Euclidean metric and the Dunkl metric.

 This paper is organized as follows.
 In the next section, we will recall
  some basic definitions and present some useful lemmas.
 In Section \ref{proofoperator}, we give the proof of
  Theorem \ref{operatorbound}.
 The upper and lower bounds for the commutator $[b, R_j^{\alpha}]$ will be discussed
 in Section \ref{proofcomubound}.
  In the last section, we provide the proof of Theorem \ref{commutatorcompact}.

To simplify the notations throughout this paper,
we write $X \lesssim Y$ to indicate the existence of a constant
 $C$ such that $X \leq CY$.
 Positive constants may vary across different occurrences. If we write $X \approx Y$,
 then both $X \lesssim Y$ and $Y \lesssim X$ hold. From the next section, we will use the notation $\|f\|_{p}=\|f\|_{L^p(\mathbb{R}^N, d \omega)}$ for any $1\leq p \leq \infty$.

\medskip

\section{Preliminaries}\label{preliminaries}

In this section, we first introduce some basic definitions and results in the Dunkl settings. For details we refer the reader to \cite{Dunkl, Roesler3, Roesler-Voit}.

In the Dunkl setting, for any Euclidean ball $B(x,r)=\{y\in \mathbb{R}^N: \|x-y\|<r\}$ centred at $x$ with radius $r$,
the scaling property
$$
 \omega(B(tx, tr))=t^{\mathbf{N}}\omega(B(x,r))
$$
holds for all $x \in \mathbb{R}^N$, $t,r>0$ and the number $\mathbf{N}$ is called the homogeneous dimension.

Observe that
\begin{equation}\label{ballmeasuresim}
\omega(B(x,r))\approx r^{N}\prod_{\alpha\in R}(\,|\langle\alpha,x\rangle|+r\,)^{\kappa(\alpha)}.
\end{equation}
Thus the measure $\omega$ satisfies the doubling condition, that is, there is a constant $C>0$ such that
\begin{equation*}\label{eqdoubling}
\omega(B(x,2r))\leq C\,\omega(B(x,r)).
\end{equation*}
It implies from the doubling condition that
\begin{equation}\label{equival vollum}
\omega(B(x,r))\approx \omega(B(y,r)), \text{ if } \|x-y\|\approx r.
\end{equation}
Moreover, $\omega$ is also a reverse doubling measure. There exists a constant $C\ge1$ such that,
for every $x\in\mathbb{R}^N$ and for every $r_1\geq r_2>0$,
\begin{equation}\label{growth}
C^{-1}\Big(\frac{r_1}{r_2}\Big)^{N}\leq\frac{{\omega}(B(x,r_1))}{{\omega}(B(x,r_2))}\leq C \Big(\frac{r_1}{r_2}\Big)^{\mathbf N}.
\end{equation}

 The ball defined via the Dunkl metric $d$ is
 $$\mathcal{O}(B(x,r)):=\left\{y\in \mathbb{R}^N:d(y,x)<r\right\}=\bigcup_{\sigma \in G}B(\sigma(x),r).
 $$
Since $G$ is a finite group, we have
$$
\omega(B(x,r))\leq \omega(\mathcal{O}(B(x,r)))\leq |G|\,{\omega}(B(x,r)).
$$
Combining with \eqref{equival vollum}, we have
\begin{equation}\label{equival O vollum}
\omega(\mathcal{O}(B(x,r)))\approx \omega(\mathcal{O}(B(y,r))), \text{ if } d(x,y)\approx r.
\end{equation}
Set
$$
V(x,y,t)=\max\{\omega(B(x,t)), \omega(B(y, t))\}.
$$

\smallskip

{\bf Dunkl operator.}
Given the reflection group $G$ of a root system $R$ and a fixed nonnegative multiplicity function $\kappa$.
$R^+$ is a positive subsystem of $R$ where the elements span a cone in the space of roots.
The Dunkl operators $T_\xi$ introduced in \cite{Dunkl} are defined by the following difference operators:
\begin{align*}
T_\xi f(x)
&{=\partial_\xi f(x)+\sum_{\alpha\in R}\frac{\kappa(\alpha)}2\langle\alpha,\xi\rangle
\frac{f(x)-f(\sigma_\alpha(x))}{\langle\alpha,x\rangle}}\\
&=\partial_\xi f(x)+\sum_{\alpha\in R^+}\kappa(\alpha)\langle\alpha,\xi\rangle\frac{f(x)-f(\sigma_\alpha(x) )}{\langle\alpha,x\rangle},
\end{align*}
which are the deformations of the directional derivatives $\partial_\xi$.

\smallskip

{\bf Dunkl kernel.} For fixed ${x}\in\mathbb{R}^N$, consider the simultaneous eigenfunction problem
$$
T_\xi f=\langle x,\xi\rangle\,f,\quad\forall\;\xi\in\mathbb{R}^N.
$$
 Then, its unique solution $f(y)=E(x,y)$ with $f(0)=1$ is the Dunkl kernel ${y\longmapsto}{E}(x,y)$.
The following integral formula was {obtained} by R\"osler \cite{Roesle99}\,{:}
\begin{equation}\label{Rintegral}
{E}(x,y)=\int_{\mathbb{R}^N} e^{\langle\eta, y\rangle} d\mu_{x}(\eta),
\end{equation}
where ${\mu_{x}}$ is a probability measure supported {in the convex hull} $\operatorname{conv}\mathcal{O}(x)$ of the $G$-orbit of $x$. The function ${E}(x,y)$ extends holomorphically to $\mathbb{C}^N\times\mathbb{C}^N$. Please refer to \cite{ADH} for more properties for the Dukl kernel.

\smallskip

{\bf Dunkl transform and Dunkl translation.}
The Dunkl transform is a topological automorphism of the Schwartz space $\mathcal{S}(\mathbb{R}^N)$. For every $f\in\mathcal{S}(\mathbb{R}^N)$ and actually for every $f\in L^1({\mathbb{R}^N, d\omega})$ such that $\mathcal{F}_{\kappa}f\in L^1({\mathbb{R}^N, d\omega})$, we have
$$
f(x)=\big(\mathcal{F}_{\kappa}\big)^2f(-x),
\quad{\forall\;x\in\mathbb{R}^N}.
$$
Moreover, the Dunkl transform extends to an isometric automorphism of $L^2({\mathbb{R}^N, d\omega})$ (see \cite{dJ}, \cite{Roesler-Voit}).

The Dunkl translation $\tau_{x}f$ of a function $f\in\mathcal{S}(\mathbb{R}^N)$ by $x\in\mathbb{R}^N$ is defined by
\begin{equation*}
\tau_{x} f(y)=c_{\kappa}^{-1} \int_{\mathbb{R}^N}{E}(i\xi,x)E(i\xi,y)\mathcal{F}_{\kappa}f(\xi)  \,d\omega(\xi).
\end{equation*}
Notice that each translation $\tau_{x}$ is a continuous linear map of $\mathcal{S}(\mathbb{R}^N)$ into itself, which extends to a contraction on $L^2({\mathbb{R}^N, d\omega})$. The Dunkl translations $\tau_{x}$ and the Dunkl operators $T_\xi$ all commute. For all $x,y\in\mathbb{R}^N$, and
$f,g\in\mathcal{S}(\mathbb{R}^N)$, $\tau_{x}$ also satisfies
\begin{itemize}
\item
$\tau_{x}f(y)=\tau_{y}f(x),$

\smallskip
\item
$\displaystyle \int_{\mathbb{R}^N}\tau_{x}f(y)g(y)\, d\omega(y)
=\int_{\mathbb{R}^N}f(y)\tau_{-x}g(y)  \, d\omega(y)$.
\end{itemize}

The following specific formula was obtained by R\"osler \cite{Roesler2003}
for the Dunkl translations of radial functions $f(x)=\tilde{f}({\|x\|})$\,:
\begin{equation}\label{translation-radial}
\tau_{x}f(-y)=\int_{\mathbb{R}^N}{\big(\tilde{f}\circ A\big)}(x,y,\eta)\,d\mu_{x}(\eta),{\qquad\forall\;x,y\in\mathbb{R}^N.}
\end{equation}
Here
\begin{equation*}
A(x,y,\eta)=\sqrt{{\|}x{\|}^2+{\|}y{\|}^2-2\langle y,\eta\rangle}=\sqrt{{\|}x{\|}^2-{\|}\eta{\|}^2+{\|}y-\eta{\|}^2}
\end{equation*}
and $\mu_{x}$ is the probability measure occurring in \eqref{Rintegral}, which is supported in $\operatorname{conv}\mathcal{O}(x)$.

\smallskip

{\bf Heat kernel.}
Set $T_j=T_{e_j}$, where $\{e_1,\dots,e_N\}$ is the canonical basis of $\mathbb{R}^N$. Then, the Dunkl Laplacian ${\Delta}:=\sum_{j=1}^NT_{j}^2$ associated with $R$ and $\kappa$ is the differential-difference operator, which acts on $C^2$ functions by
$$
{\Delta}f(x)
{=\Delta_{\text{eucl}}f(x)
+\sum_{\alpha\in R}\kappa(\alpha)\delta_\alpha f(x)}
=\Delta_{\text{eucl}}f(x)
+2\sum_{\alpha\in R^+}  \kappa(\alpha)\delta_\alpha f(x),
$$
where
$$
\delta_\alpha f(x)
=\frac{\partial_\alpha f(x)}{\langle\alpha,x\rangle}-\frac{f(x)-f(\sigma_\alpha(x))}{\langle \alpha,x\rangle^2}
$$
and $\Delta_{\text{eucl}}=\sum_{j=1}^N\partial_{j}^2$ is the classic Laplacian on $\mathbb{R}^N$.
In particular, we have
\begin{equation}\label{FDelta}
\mathcal{F}_{\kappa}({\Delta}f)(\xi)=-\|\xi\|^2\mathcal{F}_{\kappa} f(\xi)
\end{equation}
and
\begin{equation}\label{FTj}
\mathcal{F}_{\kappa}({T_j}f)(\xi)=i\xi_{j}\mathcal{F}_{\kappa}f(\xi).
\end{equation}

The operator $\Delta$ is essentially self-adjoint on $L^2(\mathbb{R}^N, d\omega)$
 (see for instance \cite[Theorem\;3.1]{AH})
and generates the heat semigroup
\begin{equation}\label{heat_semigroup}
H_tf(x)=e^{t{\Delta}}f(x)=\int_{\mathbb{R}^N}
h_t(x,y)f(y)\,   d\omega (y).
\end{equation}
Here the heat kernel $h_t(x,y)$ is a $C^\infty$ function in all variables $t>0$, $x,y\in\mathbb{R}^N$, which satisfies
$$
h_t(x,y)=h_t(y,x)
{>0\quad\text{and}\quad}
\int_{\mathbb{R}^N} h_t(x,y)\,d\omega(y)=1.
$$

Specifically, for every $t>0$ and for every $x,y\in\mathbb{R}^N$,
\begin{equation}\label{Expression1HeatKernel}
h_t(x,y)
=\tau_{x}h_t(-y),
\end{equation}
where
\begin{equation*}
h_t(x)
=c_{\kappa}^{-1}\,(2t)^{-\frac{\mathbf{N}}{2}}\,\exp\left(-\frac{{\|x\|}^2}{4t}\right).
\end{equation*}
Note that we can write the fractional Riesz transform as
\begin{align}\label{eqFRT}
R_j^{\alpha} f=-T_j(-\Delta)^{-\frac{1+\alpha}{2}} f=-C_{\alpha} \int_0^{\infty} T_j e^{t \Delta} f t^{\frac{1+\alpha}{2}-1}d t,
\end{align}
where $0<\alpha<N$.
In \cite[Lemma 3.3]{DA2020}, for all $x, y \in \mathbb{R}^N$ and $t>0$,
\begin{equation}\label{eqn:TjHeat}
T_j h_t(x, y)=\frac{y_j-x_j}{2 t} h_t(x, y).
\end{equation}
Here are some useful estimates of heat kernels.
\begin{lemma}\label{theoremGauss}
{\rm(a) } {\rm{(}}\cite{DA2020}{\rm{)}}
There are constants $C,c>0$ such that
\begin{equation}\label{Gauss}
 h_t(x,y) \leq C
\left(1+\frac{\|x-y\|}{\sqrt{t}}\right)^{-2}
\frac{1}{V(x,y,\sqrt{t\,})}\exp\left(-c  \frac{d(x,y)^2}{ t}\right)
\end{equation}
{for every $t>0$ and for every \,$x,y\in\mathbb{R}^N$.}
\par\noindent
{\rm(b) }{\rm{(}}\cite{DA2020}{\rm{)}}
There are constants $C,c>0$ such that
\begin{equation}\label{Holder}
\left|h_t(x,y)-h_t(x,y')\right|\leq C\frac{{\|y-y'\|}}{\sqrt{t}}\left(1+\frac{\|x-y\|}{\sqrt{t}}\right)^{-2}
\frac{1}{V(x,y,\sqrt{t\,})}
\,\exp\left(-c  \frac{d(x,y)^2}{ t}\right)
\end{equation}
for every $t>0$ and for every $x,y,y'\in\mathbb{R}^N$ such that $\|y-y'\|<\sqrt t$.
\par\noindent
{\rm(c) } {\rm{(}}\cite{ADH}{\rm{)}}
There exist positive constants $C$ and $c$ such that
\begin{equation}\label{gaussian_lower}
h_t(x,y)\geq\frac{C}{\min\left\{\omega(B(x,\sqrt{t})),  \omega(B(y,\sqrt{t}))\right\}}
\exp\left(-c \frac{\|x-y\|^{2}}  { t}\right)
\end{equation}
for every $t>0$ and for every $x,y\in\mathbb{R}^N$.
\end{lemma}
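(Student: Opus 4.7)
The plan is to derive all three estimates from R\"osler's integral formula \eqref{translation-radial} applied to the radial Gaussian kernel. Since $h_t(x)$ is radial, combining \eqref{Expression1HeatKernel} with \eqref{translation-radial} yields
$$h_t(x,y)=c_{\kappa}^{-1}(2t)^{-\mathbf{N}/2}\int_{\mathbb{R}^N}\exp\bigl(-A(x,y,\eta)^2/(4t)\bigr)\,d\mu_x(\eta),$$
where $\mu_x$ is a probability measure supported in $\operatorname{conv}\mathcal{O}(x)$ and $A(x,y,\eta)^2=\|x\|^2+\|y\|^2-2\langle y,\eta\rangle$. This single representation is the common backbone for parts (a), (b), (c), and the task reduces to estimating the integrand and the mass of $\mu_x$ appropriately.

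For (a), the key observation is that for $\eta\in\operatorname{conv}\mathcal{O}(x)$ the function $\eta\mapsto A(x,y,\eta)^2$ is affine, so its minimum over the convex hull is attained at a vertex $\sigma(x)$, giving $A(x,y,\eta)\geq d(x,y)$. Splitting $e^{-A^2/(4t)}=e^{-A^2/(8t)}\cdot e^{-A^2/(8t)}$ produces the factor $\exp(-cd(x,y)^2/t)$ together with a residual Gaussian that, using the elementary bound $(1+s)^{-2}e^{-s^2/C}\lesssim e^{-s^2/(2C)}$ with $s=\|x-y\|/\sqrt t$ and $A\geq\|x\|-\|y\|$ when needed, yields the polynomial prefactor $(1+\|x-y\|/\sqrt t)^{-2}$. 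The normalizing constant $(2t)^{-\mathbf{N}/2}$ is then converted into $V(x,y,\sqrt t)^{-1}$ by invoking \eqref{ballmeasuresim}, the doubling bound \eqref{growth}, and the equivalence \eqref{equival vollum}, with a short case split according to whether $\sqrt t$ exceeds $\|x-y\|$ or not.

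For (b), the idea is to apply the fundamental theorem of calculus in the $y$-variable inside the integrand. Since $\nabla_y(A^2/(4t))=(y-\eta)/(2t)$, the mean value step produces
$$\bigl|e^{-A(x,y,\eta)^2/(4t)}-e^{-A(x,y',\eta)^2/(4t)}\bigr|\lesssim\frac{\|y-y'\|\,\|y-\eta\|}{t}\exp\bigl(-cA(x,\tilde y,\eta)^2/t\bigr)$$
for some $\tilde y$ on the segment $[y,y']$. Under the hypothesis $\|y-y'\|<\sqrt t$ the extra $\|y-\eta\|/\sqrt t$ is absorbed by a polynomial refinement of the argument of part (a), leaving the gain $\|y-y'\|/\sqrt t$. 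Integrating against $\mu_x$ and replaying the volume conversion delivers \eqref{Holder}.

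For (c), the lower bound features the Euclidean distance $\|x-y\|$ rather than $d(x,y)$; this is possible only because $\mu_x$ gives nontrivial mass near the vertex $\eta=x$ of $\operatorname{conv}\mathcal{O}(x)$, where $A(x,y,x)=\|x-y\|$. I would restrict the integral to a small neighbourhood of this vertex, where the integrand is bounded below by a uniform multiple of $\exp(-c\|x-y\|^2/t)$, and convert $(2t)^{-\mathbf{N}/2}$ into $\min\{\omega(B(x,\sqrt t)),\omega(B(y,\sqrt t))\}^{-1}$ using \eqref{ballmeasuresim}. The main obstacle is the uniform positivity of $\mu_x$ near the vertex: an explicit description of $\mu_x$ is available only in special cases, so one must rely on the indirect argument of \cite{ADH}, which exploits positivity of Dunkl translations of Gaussians together with subordination to bypass the need for a closed form of $\mu_x$.
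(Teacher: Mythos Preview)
The paper does not prove this lemma at all: it is stated with citations to \cite{DA2020} for (a)--(b) and to \cite{ADH} for (c), and then used as a black box. So your proposal is not competing with a proof in the paper but rather sketching what the cited references do. Your overall strategy---exploit R\"osler's integral formula \eqref{translation-radial} together with the elementary lower bound $A(x,y,\eta)\ge d(x,y)$ on the support of $\mu_x$---is indeed the backbone of the arguments in those sources.

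That said, your sketch for (a) has a real gap at the step producing the polynomial factor $(1+\|x-y\|/\sqrt t)^{-2}$. The bounds you invoke, $A(x,y,\eta)\ge d(x,y)$ and $A(x,y,\eta)\ge\bigl|\|x\|-\|y\|\bigr|$, control only the orbit distance and the difference of norms, neither of which dominates $\|x-y\|$. A point $\eta\in\operatorname{conv}\mathcal O(x)$ can make $A(x,y,\eta)$ small while $\|x-y\|$ is large (take $x$ and $y$ in the same $G$-orbit but far apart in the Euclidean sense), so no amount of splitting $e^{-A^2/(4t)}$ will manufacture decay in $\|x-y\|/\sqrt t$ from the integrand alone. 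In \cite{DA2020} the extra polynomial gain comes from a finer analysis: one writes $A(x,y,\eta)^2=\|x\|^2-\|\eta\|^2+\|y-\eta\|^2$ and pairs this with additional structural information on the support of $\mu_x$ (or, equivalently, a decomposition of the Dunkl translate into pieces indexed by $\sigma\in G$, each carrying Euclidean Gaussian decay with respect to $\|\sigma(x)-y\|$). Your volume-conversion step and the mean-value argument for (b) are fine once (a) is in hand, and your identification of the obstruction in (c)---positivity of $\mu_x$ near the vertex $x$, handled in \cite{ADH} by an indirect semigroup argument rather than by computing $\mu_x$---is accurate.
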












\smallskip

{\bf VMO spaces.}
Here, we give the definitions of the $\operatorname{CVMO}_{\text {Dunkl}}$ and $\mathrm{VMO}_d$ spaces associated with the Euclidean metric and the Dunkl metric. Let $r_B$ be the radius of the Euclidean ball $B \subseteq \mathbb{R}^N$. First, we define the central VMO space in the Dunkl setting as follows:
$$
\operatorname{CVMO}_{\text {Dunkl }}(\mathbb{R}^N)=\left\{\, b \in \mathrm{CBMO}_{\text {Dunkl }}(\mathbb{R}^N):\eqref{eq(1)}-\eqref{eq(3)} \text { hold}\,\right\}
$$
where
\begin{equation}\label{eq(1)}
\lim _{r_B \rightarrow 0} \sup _{ 0\in B} \frac{1}{\omega(B)} \int_{B}|b(x)-b_{B}| \,d \omega(x)=0,
\end{equation}
\begin{equation}\label{eq(2)}
 \lim _{r_B \rightarrow \infty} \sup _{  0\in B } \frac{1}{\omega(B)} \int_{B}|b(x)-b_{B}|\, d \omega(x)=0,
 \end{equation}
\begin{equation}\label{eq(3)}
 \lim _{r \rightarrow \infty} \sup _{B \subseteq \mathbb{R}^N, B \cap B(0, r)=\emptyset} \frac{1}{\omega(B)} \int_B|b(x)-b_B| \,d \omega(x)=0.
 \end{equation}

Next, we define the VMO space associated with the Dunkl metric as follows:
$$
\mathrm{VMO}_d(\mathbb{R}^N)=\left\{\, b \in \mathrm{BMO}_d(\mathbb{R}^N):\eqref{eq(4)}-\eqref{eq(6)} \text { hold}\,\right\}
$$
where
\begin{equation}\label{eq(4)}
\lim _{r_B \rightarrow 0} \sup _{\mathcal{O}(B) \subseteq \mathbb{R}^N} \frac{1}{\omega(\mathcal{O}(B))} \int_{\mathcal{O}(B)}|b(x)-b_{\mathcal{O}(B)}| \, d \omega(x)=0,
\end{equation}
\begin{equation}\label{eq(5)}
\lim _{r_B \rightarrow \infty} \sup _{\mathcal{O}(B) \subseteq \mathbb{R}^N} \frac{1}{\omega(\mathcal{O}(B))} \int_{\mathcal{O}(B)}|b(x)-b_{\mathcal{O}(B)}| \,   d \omega(x)=0,
\end{equation}
\begin{equation}\label{eq(6)}
 \lim _{r \rightarrow \infty} \sup _{B \subseteq \mathbb{R}^N, \, \mathcal{O}(B) \cap B(0, r)=\emptyset} \frac{1}{\omega(\mathcal{O}(B))} \int_{\mathcal{O}(B)}|b(x)-b_{\mathcal{O}(B)}|  \,   d \omega(x)=0.
\end{equation}

\smallskip

{\bf Maximal function.}
The Hardy-Littlewood maximal function $M$ in the Dunkl setting  is defined as
$$
M f(x)=\sup _{ x\in B} \frac{1}{\omega(B)} \int_B|f(y)| \,   d \omega(y)
$$
and the fractional maximal function $M^{\beta} $ is defined as
$$
M^{\beta} f(x)=\sup_{ x\in B} \frac{1}{\omega(B)^{1-\beta/\mathbf{N}}} \int_{B}|f(y)| \,  d \omega(y),
$$
for any $0<\beta <\mathbf{N}$.

Note that $(\mathbb{R}^N, \|\cdot\|, d\omega)$ is a
space of homogeneous type in the sense of Coifman and Weiss.
Then, we have $M$ is bounded on $L^p(\mathbb{R}^N, \|\cdot\|, d\omega)$ (see \cite{St2})
and $M^{\beta}$ is bounded from $L^p(\mathbb{R}^N,\|\cdot\|, d\omega)$
to $L^q(\mathbb{R}^N, \|\cdot\|,d\omega)$
with $\displaystyle \frac{1}{q}=\frac{1}{p}-\frac{\beta}{\mathbf{N}}$ (see \cite{BS1994}).
Moreover, the sharp maximal function $f^{\sharp}$ is defined as
$$
f^{\sharp}(x)=\sup _{x \in B} \frac{1}{\omega(B)} \int_B|f(y)-f_B|  \,  d \omega(y).
$$
From \cite[Theorem 5.5]{HT2019} and \cite[Theorem 3.1]{HLLW}, we have the following lemma.

\begin{lemma}{\rm{(}}\cite{HT2019, HLLW}{\rm{)}}\label{bmoApprox}
 Let $1 \leq p<\infty$ and $f$ is measurable function on $\mathbb{R}^N$. Then
$$
\|b\|_{\mathrm{BMO}_d} \approx \sup _{B \subseteq \mathbb{R}^N}
\left(\frac{1}{\omega(\mathcal{O}(B))} \int_{\mathcal{O}(B)} \left|b(x)-b_{\mathcal{O}(B)}\right|^p
d \omega(x)\right)^{ 1 / p}.
$$
\end{lemma}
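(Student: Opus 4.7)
The plan is to prove this as a John--Nirenberg type equivalence for $\mathrm{BMO}$ on the space of homogeneous type $(\mathbb{R}^N, d, \omega)$. The direction $\|b\|_{\mathrm{BMO}_d}\lesssim\mathrm{RHS}$ is immediate from H\"older's inequality applied to the average
$$
\frac{1}{\omega(\mathcal{O}(B))}\int_{\mathcal{O}(B)}|b(x)-b_{\mathcal{O}(B)}|\,d\omega(x),
$$
so the whole argument reduces to the reverse bound.

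For the reverse direction, I would first verify that $(\mathbb{R}^N, d, \omega)$ is a space of homogeneous type in the Coifman--Weiss sense. The triangle inequality for $d(x,y)=\min_{\sigma\in G}\|x-\sigma(y)\|$ is elementary, and the identity
$$
\mathcal{O}(B(x,r))=\bigcup_{\sigma\in G}B(\sigma(x),r)=\{y\in\mathbb{R}^N:d(x,y)<r\}
$$
shows that Dunkl balls are exactly the metric balls of $d$. The doubling condition $\omega(\mathcal{O}(B(x,2r)))\lesssim\omega(\mathcal{O}(B(x,r)))$ then follows from the finite cover above combined with the Euclidean doubling \eqref{growth} and the equivalence \eqref{equival O vollum}.

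Once the homogeneous-type structure is in place, the classical John--Nirenberg inequality yields constants $C_1,C_2>0$ such that, for every Euclidean ball $B$ and every $\lambda>0$,
$$
\omega\bigl(\{x\in\mathcal{O}(B):|b(x)-b_{\mathcal{O}(B)}|>\lambda\}\bigr)\leq C_1\,\omega(\mathcal{O}(B))\exp\Bigl(-\frac{C_2\lambda}{\|b\|_{\mathrm{BMO}_d}}\Bigr).
$$
Integrating this distributional estimate via the layer-cake formula against $p\lambda^{p-1}\,d\lambda$ gives
$$
\frac{1}{\omega(\mathcal{O}(B))}\int_{\mathcal{O}(B)}|b(x)-b_{\mathcal{O}(B)}|^p\,d\omega(x)\lesssim\|b\|_{\mathrm{BMO}_d}^p,
$$
and taking the supremum over $B$ followed by a $p$-th root produces the desired inequality.

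The only nontrivial step is establishing the John--Nirenberg inequality in this setting. The standard Calder\'on--Zygmund stopping-time proof transfers once we have a Whitney-type decomposition of $\mathcal{O}(B)$ into sub-balls of the form $\mathcal{O}(B')$, which is available because $(\mathbb{R}^N,d,\omega)$ is of homogeneous type; this is precisely how the cited references \cite{HT2019, HLLW} proceed. Consequently, the main obstacle is not the layer-cake passage from exponential decay to $L^p$ bounds, but rather the verification that the Dunkl metric structure supports the underlying Calder\'on--Zygmund machinery, namely the interplay between Euclidean balls, their $G$-orbits, and the associated doubling measure.
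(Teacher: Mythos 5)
The paper does not prove this lemma itself; it cites \cite[Theorem 5.5]{HT2019} and \cite[Theorem 3.1]{HLLW}, both of which establish exactly this equivalence via the John--Nirenberg inequality on the space of homogeneous type $(\mathbb{R}^N, d, \omega)$, which is the route you take. Your reconstruction is correct and follows the same approach as the cited sources: H\"older for the easy direction, verification that $d$ is a (pseudo)metric with doubling measure so that the Calder\'on--Zygmund machinery and hence John--Nirenberg apply, and layer-cake integration to pass from exponential level-set decay to the $L^p$ bound.
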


\medskip

\section{Proof of Theorem \ref{operatorbound}}\label{proofoperator}
First, we will give a lemma to provide the size condition and smoothness condition for the kernel of the fractional Dunkl Riesz transform.

\begin{lemma}\label{smoothness}
 For $0<\alpha<N$, there exists a constant $C$ such that for $j\in \left\{ 1,2, \ldots, N\right\}$ and for every $x, y$ with $d(x, y) \neq 0$,
\begin{equation}\label{sizecondi}
 \left|R_j^{\alpha}(x, y)\right| \leq C  \frac{d(x,y)^{\alpha}}{\omega(B(x, d(x, y)))},
\end{equation}
\begin{equation}\label{smoothcondi1}
\left|R_j^{\alpha}(x, y)-R_j^{\alpha}(x, y^{\prime})\right| \leq C \frac{\|y-y^{\prime}\|}{\|x-y\|}
 \frac{d(x,y)^{\alpha}}{\omega(B(x, d(x, y)))}
\end{equation}
for $\|y-y^{\prime}\| \leq d(x, y) / 2 $, and
\begin{equation}\label{smoothcondi2}
 \left|R_j^{\alpha}(x^{\prime}, y)-R_j^{\alpha}(x, y)\right| \leq C \frac{\|x-x^{\prime}\|}{\|x-y\|}
\frac{d(x,y)^{\alpha}}{\omega(B(x, d(x, y)))}
\end{equation}
for $\|x-x^{\prime}\| \leq d(x, y) / 2$.
\end{lemma}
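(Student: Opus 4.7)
My starting point is the identity
\begin{equation*}
R_j^{\alpha}(x,y)
=-C_{\alpha}\int_0^{\infty}\frac{y_j-x_j}{2t}\,h_t(x,y)\,t^{\frac{\alpha-1}{2}}\,dt,
\end{equation*}
which comes directly from \eqref{eqFRT} and \eqref{eqn:TjHeat}. All three estimates are then reduced to time integrals of the heat kernel bounds in Lemma~\ref{theoremGauss} together with the doubling/reverse doubling estimate \eqref{growth}.

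For the size bound \eqref{sizecondi}, I bound $|y_j-x_j|\le\|x-y\|$, insert \eqref{Gauss}, and apply $s(1+s)^{-2}\le 1$ with $s=\|x-y\|/\sqrt{t}$ to dispose of the linear factor. Writing $r=d(x,y)$, the matter reduces to
\begin{equation*}
\int_0^{\infty}\frac{1}{V(x,y,\sqrt{t})}\exp(-c r^2/t)\,t^{\alpha/2-1}\,dt,
\end{equation*}
which I split at $t=r^2$. For $t>r^2$ the exponential is controlled and the lower bound in \eqref{growth} (polynomial of order $N$) makes the integrand integrable at infinity because $\alpha<N$; for $t\le r^2$ the Gaussian decay beats the upper-$\mathbf{N}$ growth of $V(x,y,\sqrt{t})$, and the substitution $u=r^2/t$ produces a finite gamma-type integral. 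Both contributions are of order $r^{\alpha}/V(x,y,r)$, and \eqref{sizecondi} follows from $V(x,y,r)\ge\omega(B(x,r))$.

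For the smoothness \eqref{smoothcondi1}, I decompose
\begin{equation*}
\frac{y_j-x_j}{2t}h_t(x,y)-\frac{y'_j-x_j}{2t}h_t(x,y')
=\frac{y_j-y'_j}{2t}h_t(x,y)+\frac{y'_j-x_j}{2t}\bigl[h_t(x,y)-h_t(x,y')\bigr].
\end{equation*}
The first piece is handled using the sharper inequality $\tfrac{1}{t}(1+\|x-y\|/\sqrt{t})^{-2}\le\tfrac{1}{\|x-y\|\sqrt{t}}$ (from $(\sqrt{t}+\|x-y\|)^2\ge 2\sqrt{t}\|x-y\|$), which extracts the extra factor $\|y-y'\|/\|x-y\|$ and leaves precisely the integral from the size bound. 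For the second piece I split at $\|y-y'\|=\sqrt{t}$: if $\|y-y'\|<\sqrt{t}$, apply \eqref{Holder}, use $|y'_j-x_j|\lesssim\|x-y\|$ (since $\|y-y'\|\le d(x,y)/2\le\|x-y\|/2$), and invoke $\tfrac{\|x-y\|}{t}(1+\|x-y\|/\sqrt{t})^{-2}\le\tfrac{1}{\|x-y\|}$ to obtain the same integrand; if $\|y-y'\|\ge\sqrt{t}$ (so $t\le r^2/4$), bound the two heat kernels separately by \eqref{Gauss}, noting that $d(x,y')\ge r/2$ and $\|x-y'\|\approx\|x-y\|$ transport the Gaussian decay, and absorb the missing factor via $1\le\|y-y'\|/\sqrt{t}$. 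Estimate \eqref{smoothcondi2} then follows from the symmetry $h_t(x,y)=h_t(y,x)$ and the analogous decomposition in the $x$-variable. The main obstacle is the range $\|y-y'\|\ge\sqrt{t}$ in \eqref{smoothcondi1}: the H\"older bound \eqref{Holder} is no longer available, and one must verify that $V(x,y',\sqrt{t})$ and the Gaussian exponent built from $d(x,y')$ are comparable to their counterparts at $y$ uniformly in $t$, which is the only place where the reflection group geometry intervenes in a non-routine way.
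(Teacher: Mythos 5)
Your plan is correct and follows essentially the same route as the paper: express the kernel as $R_j^{\alpha}(x,y)=-\tfrac{C_\alpha}{2}(y_j-x_j)\int_0^\infty h_t(x,y)\,t^{(\alpha-1)/2-1}\,dt$, split the time integral at the natural scales, and use the Gaussian/H\"older heat-kernel bounds with \eqref{growth}; for \eqref{smoothcondi1} the paper makes the same $\mathcal R_I+\mathcal R_{II}$ decomposition and the same cut at $t=\|y-y'\|^2$. The ``main obstacle'' you flag in the range $\|y-y'\|\ge\sqrt{t}$ is in fact routine: since $\|y-y'\|\le d(x,y)/2$ gives $d(x,y')\approx d(x,y)$ and $\|x-y'\|\approx\|x-y\|$, one simply lower-bounds both $V(x,y,\sqrt t)$ and $V(x,y',\sqrt t)$ by $\omega(B(x,\sqrt t))$, so no comparison of the two $V$'s is ever needed.
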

\begin{remark}
It is important to emphasize that for $0<\alpha <N-1$,
we have the estimate:
\begin{equation*}
 \left|R_j^{\alpha}(x, y)\right| \leq C \frac{d(x,y)}{\|x-y\|} \frac{d(x,y)^{\alpha}}{\omega(B(x, d(x, y)))},
\end{equation*}
which recovers the corresponding size estimate for the kernel of the Riesz transforms established in \cite[Theorem 1.1]{HLLW} as $\alpha =0$.

When $N-1<\alpha<N$, we can only obtain the size estimate shown in \eqref{sizecondi}. However, this slight difference will not affect the boundedness of $R_j^{\alpha}$ and their commutators.
\end{remark}

\begin{proof}[Proof of Lemma \ref{smoothness}]
Since $R_j^\alpha$ is a convolution operator with the kernel $R_j^{\alpha}(x,y)$, then
\begin{align*}
 R_j^\alpha f(x)
 = \int_{\mathbb{R}^N} R_j^\alpha(x, y) f(y) \,d\omega( y)    .
\end{align*}
Combining  \eqref{eqFRT} and \eqref{eqn:TjHeat}, we have \begin{align*}
 R_j^\alpha f(x)=&\left(-C_\alpha  \int_0^\infty T_j e^{t \Delta} t^{\frac{1+\alpha}{2}-1}d t\right) f(x)\\
 =& -C_\alpha \int_0^{\infty} T_j \int_{\mathbb{R}^N} h_t(x, y) f(y) \,d\omega( y)\, t^{\frac{1+\alpha}{2}-1} d t    \\
 =& -C_\alpha \int_0^{\infty} \int_{\mathbb{R}^N} \frac{y_j-x_j}{2 t}
 h_t(x, y) f(y) \,d \omega(y) \,t^{\frac{1+\alpha}{2}-1} d t    \\
 =& \int_{\mathbb{R}^N}\left(-C_\alpha \int_0^{\infty} \frac{y_j-x_j}{2 t} h_t(x, y)  t^{\frac{\alpha-1}{2}} d t\right) f(y) \,d\omega( y)    .
\end{align*}
Therefore,
\begin{align}\label{kerneldefi}
 R_j^\alpha(x, y)=-\frac{C_\alpha}{2}\left(y_j-x_j\right) \int_0^\infty
 h_t(x, y) t^{\frac{\alpha-1}{2}-1} d t.
\end{align}
Then, by Lemma \ref{theoremGauss}, we have the estimate
\begin{align*}
 &\left|R_j^\alpha(x, y)\right| \\
 \lesssim &\left\{\int_0^{d(x,y)^2} +\int_{d(x,y)^2}^{\|x-y\|^2}+\int_{\|x-y\|^2}^\infty  \right\}\frac{|y_j-x_j| }{V(x, y, \sqrt{t})}\left(1+\frac{\| x -y \|}{\sqrt{t}}\right)^{-2} \exp\left(-c \frac{d(x, y )^2}{t}\right) t^{\frac{\alpha-1}{2}-1} d t \\
 =:& R_{I}(x,y)+R_{II}(x,y)+R_{III}(x,y)
\end{align*}
When $t<d(x, y)^2$, then $\displaystyle t^{\frac{\alpha}{2}}<d(x, y)^{\alpha}$. Applying the second inequality in \eqref{growth}, we have
\begin{align*}
R_{I}(x, y)
\lesssim &\frac{|y_j-x_j|}{\omega(B(x, d(x,y))) } \int_0^{d(x, y)^2}
\left(\frac{d (x,y)}{\sqrt{t}}\right)^{\mathbf{N}}
\left(\frac{\| x -y \|}{\sqrt{t}}\right)^{-2} \exp\left(-c \frac{d(x, y )^2}{t}\right) t^{\frac{\alpha-1}{2}-1} d t\\
\lesssim & \frac{|y_j-x_j|}{\omega(B(x, d (x, y)))} \int_0^{d(x, y)^2}
\frac{d(x, y)^\alpha}{\|x-y\|^2} \left(\frac{d(x, y)}{\sqrt{t}}\right)^{\mathbf{N}} \left(\frac{t}{d(x, y)^2}\right)^{\frac{\mathbf{N}+1}{2}} t^{-\frac{1}{2}} d t \\
\lesssim & \frac{|y_j-x_j|}{\|x-y\|^2} \frac{d(x, y)^\alpha}{\omega(B(x, d(x, y)))} \int_0^{d(x, y)^2}d(x, y)^{-1} d t \\
\lesssim & \frac{|y_j-x_j|}{\|x-y\|}  \frac{d(x, y)}{\|x-y\|}
\frac{d(x, y)^\alpha}{\omega(B(x, d(x, y)))} .
\end{align*}

When $d(x, y)^2\leq t\leq \|x-y\|^2$, then $\displaystyle t^{\frac{-N+\alpha}{2}}\leq d(x, y)^{-N+\alpha}$. By  the first inequality in \eqref{growth},
\begin{align*}
R_{II}(x,y)
\lesssim &\frac{|y_j-x_j|}{\omega(B(x, d(x,y))) }
 \int_{d(x, y)^2}^{\|x-y\|^2}
 \left(\frac{d (x,y)}{\sqrt{t}}\right)^{N}
\left(\frac{\| x -y \|}{\sqrt{t}}\right)^{-2} t^{\frac{\alpha-1}{2}-1} d t\\
= & \frac{|y_j-x_j|}{\|x-y\|^2}
\frac{d(x, y)^{N}}{\omega(B(x, d(x, y)))}
   \int_{d(x, y)^2}^{\|x-y\|^2} t^{\frac{-N+\alpha}{2}} t^{-\frac{1}{2}} d t  \\
   \lesssim & \frac{|y_j-x_j|}{\|x-y\|^2}
\frac{d(x, y)^{\alpha}}{\omega(B(x, d(x, y)))}
   \int_{d(x, y)^2}^{\|x-y\|^2} t^{-\frac{1}{2}} d t  \\
\lesssim & \frac{|y_j-x_j|}{\|x-y\|}
\frac{d(x, y)^{\alpha}}{\omega(B(x, d(x, y)))}.
\end{align*}

When $t>\|x-y\|^2 $, then
$\displaystyle t^{\frac{-N+\alpha}{2}}<d(x,y)^{-N+\alpha}$ due to $0<\alpha<N$. Similarly,
\begin{align*}
 R_{III}(x,y)
\lesssim & | y_j-x_j| \int_{\|x-y\|^2}^{\infty} \frac{1}{\omega(B(x, \sqrt{t}))} t^{\frac{\alpha-1}{2}-1} d t \\
\lesssim & \frac{|y_j-x_j|}{\omega(B(x, d(x, y)))} \int_{\|x-y\|^2}^{\infty}\left(\frac{d(x, y)}{\sqrt{t}}\right)^{N} t^{\frac{\alpha-1}{2}-1} d t \\
= & \frac{|y_j-x_j|}{\omega(B(x, d (x, y)))} d(x, y)^N \int_{\|x-y\|^2}^{\infty} t^{\frac{-N+\alpha}{2}} t^{-\frac{1}{2}-1} d t  \\
\lesssim & \frac{|y_j-x_j|}{\omega(B(x, d(x, y)))} d(x, y)^{N} d(x, y)^{-N+\alpha}\int_{\|x-y\|^2}^{\infty}  t^{-\frac{1}{2}-1} d t \\
\lesssim & \frac{|y_j-x_j|}{\|x-y\|}\frac{d(x, y)^{\alpha}}{\omega(B(x, d(x, y)))}.
\end{align*}

Hence, we obtain \eqref{sizecondi}.

It remains to prove \eqref{smoothcondi1}, noting \eqref{smoothcondi2} can be obtained similarly. From \eqref{kerneldefi} and the non-negativity of heat kernels, we have
\begin{align*}
 \left|R_j^{\alpha}(x, y)-R_j^{\alpha}(x, y^{\prime})\right|
\leq & |y_j-y_j^{\prime}| \int_0^{\infty}h_t(x, y) t^{\frac{\alpha-1}{2}-1} d t \\
& +|y_j^{\prime}-x_j| \int_0^{\infty}|h_t(x, y)-h_t(x, y^{\prime})| t^{\frac{\alpha-1}{2}-1} d t\\
=: & \mathcal R_{I}(x,y,y') +\mathcal R_{II}(x,y,y')
\end{align*}

For $ \mathcal R_{I}(x,y,y') $. By the proof of \eqref{sizecondi},
\begin{align*}
 \mathcal R_{I}(x,y,y') \lesssim &  |y_j-y_j^{\prime} | \int_0^{\infty}
\frac{1}{V(x, y, \sqrt{t})}
\left(1+ \frac{\|x-y\|}{\sqrt{t}}\right)^{-2} \exp\left(-c\frac{d(x, y)^2}{t}\right) t^{\frac{\alpha-1}{2}-1} d t \\
\lesssim &  \frac{|y_j-y_j^{\prime}|}{\|x-y\|}
\frac{d(x,y )^\alpha}{\omega(B(x, d (x, y)))} .
\end{align*}

For $\mathcal R_{II}(x,y,y')$. We split
\begin{align*}
&\mathcal R_{II}(x,y,y') \\
= \,& |y_j^{\prime}-x_j| \left\{ \int_0^{\|y-y'\|^2} +\int_{\|y-y'\|^2}^{d(x, y)^2} +\int_{d(x, y)^2}^{\infty} \right\}\left|h_t(x, y)-h_t(x, y^{\prime})\right| t^{\frac{\alpha-1}{2}-1}d t \\
=: \,& \mathcal R_{II}^{(1)}(x,y,y')+ \mathcal R_{II}^{(2)}(x,y,y')+\mathcal R_{II}^{(3)}(x,y,y').
\end{align*}

 Since $\|y-y^{\prime}\| \leq d(x,y) / 2 $,
 we have $ d(x, y) \approx d(x, y^{\prime})$ and $\|x-y\| \approx\|x-y' \|$.
Note that
$$
|y_j^{\prime}-x_j| \leq \|y^{\prime}-x\| \leq \frac{3}{2}\|x-y\|.
$$

When $ t<\|y-y'\|^2$, then
 \begin{align*}
\mathcal R_{II}^{(1)}(x,y,y') \leq & \frac{3}{2}\|x-y\|  \int_0^{d(x, y)^2}
  \frac{\|y-y^{\prime} \|}{\sqrt{t}} \left  (|h_t( x, y)|+|h_t( x, y')| \right) t^{\frac{\alpha-1}{2}-1} d t \\
 \leq & \frac{3}{2}\|x-y\|\,\|y-y^{\prime}\| \int_0^{d(x, y)^2} \frac{1}{\sqrt{t}} \left\{\frac{1}{\omega(x, \sqrt{t})} \frac{t}{\|x-y\|^2}
  \exp\left(-c \frac{d (x, y)^2}{t}\right)\right. \\
&\hspace{4cm} \left.+\frac{1}{\omega(x, \sqrt{t})} \frac{t}{\|x-y'\|^2}
  \exp\left(-c \frac{d (x, y')^2}{t}\right) \right\}t^{\frac{\alpha-1}{2}-1} d t \\
 \lesssim & \frac{\|x-y\| \,  \|y-y^{\prime}\|}{\omega(B(x, d (x, y)))} \int_0^{d (x, y)^2}
  \left(\frac{d(x, y)}{\sqrt{t}}\right)^{\mathbf{N}}
  \frac{t}{\|x-y\|^2} \exp\left(-c\frac{d(x, y)^2}{t}\right) t^{-\frac{1}{2}} t^{\frac{\alpha-1}{2}-1} d t \\
\lesssim &\frac{\|x-y\|\,  \|y-y '\|}{\omega(B(x, d (x, y)))}
\frac{d(x, y)^{\alpha}}{\|x-y\|^2}
\int_0^{d(x, y)^2}
\left(\frac{d( x, y)}{\sqrt{t}}\right)^{\mathbf{N}}
\left(\frac{t}{d(x, y)^2}\right)^{\frac{\mathbf{N}+2}{2}}   \frac{dt}{t} \\
\lesssim & \frac{\|y-y'\|}{\|x-y\|} \frac{d(x, y)^{\alpha}}{\omega(B(x, d (x, y)))}.
\end{align*}

When $\|y-y^{\prime}\|^2\leq t\leq d(x,y)^2$, then by \eqref{Holder},
\begin{align*}
\mathcal R_{II}^{(2)}(x,y,y') \leq & \frac{3}{2}\|x-y\| \int_{\|y-y^{\prime}\|^2}^{d(x, y)^2} \frac{\|y-y^{\prime}\|}{\sqrt{t}}
\left(1+ \frac{\|x-y\|}{\sqrt{t}}\right)^{-2}
\frac{1}{V(x, y, \sqrt{t})} \exp\left(-c \frac{d (x, y)^2}{t}\right)
 t^{\frac{\alpha-1}{2}-1}d t \\
 \leq & \frac{3}{2} \frac{\|x-y\|\, \|y-y^{\prime}\| \,d (x, y)^{\alpha}}{\|x-y\|^2  \omega(B(x, d (x, y)))}
 \int_{\|y-y^{\prime}\|^2}^{d(x, y)^2}
 \left(\frac{d(x, y)}{\sqrt{t}}\right)^{\mathbf{N}}
 \left(\frac{t}{d (x, y)^2}\right)^{\frac{\mathbf{N}+2}{2}} \frac{dt}{t}  \\
\lesssim & \frac{\|y-y^{\prime}\|}{\|x-y\|}
\frac{d(x, y)^\alpha}{\omega(B(x, d (x, y)))} .
\end{align*}

When $ t> d(x,y)^2$, $\|y-y'\|<d(x,y)/2<\sqrt{t}/2$.
By \eqref{Holder} again,
\begin{align*}
\mathcal R_{II}^{(3)}(x,y,y')\leq  &\frac{3}{2}\|x-y\| \int_{d(x, y)^2}^{\infty}  \left |h_t(x, y)-h_t(x, y^{\prime})\right | t^{\frac{\alpha-1}{2}-1}d t\\
\lesssim & \frac{3}{2}\|x-y\|\int_{d(x, y)^2}^{\infty}
 \frac{\|y-y^{\prime}\|}{\sqrt{t}}
\left(1+ \frac{\|x-y\|}{\sqrt{t}}\right)^{-2}
\frac{1}{V(x, y, \sqrt{t})} \exp\left(-c \frac{d (x, y)^2}{t}\right)
 t^{\frac{\alpha-1}{2}-1}d t \\
 \leq & \frac{3}{2} \frac{\|x-y\|  \, \|y-y^{\prime}\| }{\|x-y\|^2}
 \frac{d (x, y)^{N}}{\omega(B(x, d (x, y)))}
 \int_{d(x, y)^2}^{\infty}
 t^{-\frac{N}{2}+\frac{\alpha}{2}-1}d t \\
 \lesssim &  \frac{\|y-y^{\prime}\| }{\|x-y\|}
 \frac{d (x, y)^{\alpha}}{\omega(B(x, d (x, y)))} .
 \end{align*}
We complete the proof.
\end{proof}

\smallskip

\begin{proof}[Proof of Theorem \ref{operatorbound}]
By \eqref{sizecondi}, we have
\begin{align*}
 \left|R_j^\alpha f(x)\right|
 \leq & \int_{d(x, y)<R} \frac{d (x, y)^\alpha}{\omega(B(x, d (x, y)))}|f(y)| d \omega(y)  +\int_{d(x, y) \geq R} \frac{d (x, y)^\alpha}{\omega(B(x, d (x, y)))}|f(y)| d \omega(y) \\
=:& \mathscr R_I(f) (x)+ \mathscr R_{II}(f) (x).
\end{align*}
Since for any $\lambda R>d(x,y)$,
 $$\frac{1}{\omega(B(x, d (x, y)))} \leq
 \frac{1}{\omega(B(x, \lambda R))}
 \left(\frac{\lambda R}{d (x, y)}\right)^{\mathbf{N}}. $$
 Then, we have
\begin{align*}
\mathscr R_I(f) (x) \leq & \sum_{i={-\infty}}^0
 \int_{2^{i-1} R \leq d(x, y)<2^i R}
 \frac{d(x, y)^\alpha}{\omega(B(x, 2^{i} R))}
 \left(\frac{2^i R}{d(x, y)}\right)^{\mathbf{N}}|f(y)| d \omega(y)  \\
\leq & \sum_{i={-\infty}}^0
\frac{(2^i R)^\alpha}{\omega(B(x, 2^{i} R))}
 \int_{ d(x, y)<2^i R}
 \left(\frac{2^i R}{2^{i-1} R}\right)^{\mathbf{N}}|f(y)| d \omega(y)  \\
 \leq &\sum_{i={-\infty}}^0(2^i R)^\alpha
 \frac{|G|}{\omega(\mathcal{O}(B(x, 2^i R)))}
 \int_{\mathcal{O}(B(x, 2^i R))}|f(y)| d \omega(y) \\
 \leq &\sum_{i={-\infty}}^0(2^i R)^\alpha
 \frac{|G|}{\omega(\bigcup_{\sigma \in G}B(\sigma(x),2^i R))}
 \sum_{\sigma \in G}\int_{B(\sigma(x),2^i R)}|f(y)| d \omega(y) \\
 \leq &|G|\sum_{i={-\infty}}^0(2^i R)^\alpha\sum_{\sigma \in G}
 \frac{1}{\omega(B(\sigma(x),2^i R))}
\int_{B(\sigma(x),2^i R)}|f(y)| d \omega(y) \\
 \lesssim & R^\alpha \sum_{\sigma \in G}M f(\sigma(x)),
\end{align*}
and
\begin{align*}
\mathscr R_{II}(f) (x) \leq & \sum_{i = 0}^\infty \int_{2^i R \leq d(x, y) < 2^{i+1} R}
 \frac{d(x, y)^\alpha}{\omega(B(x, 2^{i+1}R))}
 \left(\frac{2^{i+1} R}{d (x, y)}\right)^{\mathbf{N}}|f(y)| d \omega ( y) \\
 \lesssim & \sum_{i=0}^\infty
 (2^{i+1} R)^{\alpha}|G|^{1/p'}
 \omega(B(x, 2^{i+1}R))^{-1+1/p'}\|f\|_p .
\end{align*}
From \eqref{ballmeasuresim}, we have
$$
\omega(B(x, 2^{i+1} R)) \geq (2^{i+1} R)^{\mathbf{N }},
$$
thus
$$
\mathscr R_{II}(f) (x)
\lesssim  \sum_{i = 0}^\infty
 (2^i R)^{-\mathbf{N}/q}\|f\|_p
\lesssim  R^{-\mathbf{N}/q}\|f\|_p.
$$

Set
$$
R^\alpha \sum_{\sigma \in G}M f(\sigma(x)) =R^{-\mathbf{N}/q}\|f\|_p,
$$
that is, we take
$$ R=\|f\|_p^{p/\mathbf{N}} \Big (\sum_{\sigma \in G}M f(\sigma(x))\Big )^{-p/\mathbf{N}} . $$

Then
\begin{align}\label{improesti}
 \left|R_j^\alpha f(x)\right|
 \leq & \int_{\mathbb{R}^N} \frac{d (x, y)^\alpha}{\omega(B(x, d (x, y)))}|f(y)| d \omega(y)  \\\nonumber
\lesssim &\|f\|_p^{-p/q+1}
\Big (\sum_{\sigma \in G}M f(\sigma(x))\Big )^{p/q} .
\end{align}
This yields that
$$
 \|R_j^\alpha f\|_q
\lesssim \|f\|_p^{-p/q+1}\Big \| \Big (\sum_{\sigma \in G}M f(\sigma(\cdot))\Big )^{p/q} \Big \|_q
\lesssim \|f\|_p.
$$
We complete the proof of Theorem \ref{operatorbound}.\end{proof}

\medskip

\section{Proof of Theorem \ref{commutatorbound}}\label{proofcomubound}

\subsection{Upper bound of $[b,R_j^{\alpha}]$}

Suppose $b \in \mathrm{B M O}_d, 1<p<\infty$ and $f$ in
$L^p(\mathbb{R}^N, d\omega)$.
For any $x \in \mathbb{R}^N$ and for any ball $B=B\left(x_0, r\right) \subseteq \mathbb{R}^N$ containing $x$, we split $f=f_1+f_2$ with
$f_1=f \cdot  \mathsf{1}_{\mathcal{O}(5 B)}$. We have
\begin{align*}
 &[b, R_j^\alpha ](f)(y)\\
 =&b(y) R_j^\alpha (f) (y)-R_j^\alpha(b f)(y)  \\
 =&\left(b(y)-b_{\mathcal{O}(B)}\right) R_j^\alpha(f)(y)-R_j^\alpha \left((b(y)-b_{\mathcal{O}(B)}) f\right)(y) \\
= &\left(b(y)-b_{\mathcal{O}(B)}\right) R_j^\alpha(f)(y)
  -R_j^\alpha\left((b(y)-b_{\mathcal{O}(B)}) f_1\right)(y)
  -R_j^\alpha \left((b(y)-b_{\mathcal{O}(B)}) f_2\right)(y) \\
=: & \mathcal C_1(f)(y)+  \mathcal C_2(f)(y)+\mathcal C_3(f)(y).
\end{align*}
Then, we will consider the sharp maximal function of $[ b, R_{j}^\alpha ] f$.
\begin{align*}
\left([ b, R_j^\alpha] f\right)^{\#}(x)
=&\sup _{x \in B} \frac{1}{\omega(B)}
\int_B \left|[ b, R_j^\alpha] f ( y)-\big([ b, R_j^\alpha] f\big)_B \right| d \omega(y) \\
\leq & \sum_{i=1,2,3}\sup _{x \in B} \frac{1}{\omega(B)} \int_B \left|\mathcal C_i (f)(y)-(\mathcal C_i (f))_B\right| d \omega(y) .
\end{align*}
Then, choose $1<s<p$, we have the estimate
\begin{align*}
 &\frac{1}{\omega(B)} \int_B \left|\mathcal C_1(f)(y)-(\mathcal C_1(f))_B\right| d \omega(y) \\
\leq &  \frac{2}{\omega(B)} \int_B|\mathcal C_1(f)(y)| d \omega(y) \\
\lesssim &\frac{1}{\omega(B)} \int_B \left|b(y)-b_{\mathcal{O}(B)}\right|  \left|R_j^\alpha(f)(y)\right | d \omega(y) \\
\lesssim &
\left(\frac{1}{\omega(B)} \int_B \left|b(y)-b_{\mathcal{O}(B)}\right |^{s^{\prime}} d \omega(y)\right)^{1/s^{\prime}}
\left(\frac{1}{\omega(B)} \int_B  \left |R_j^\alpha(f)(y)\right |^s d \omega(y)\right)^{1/s} \\
\leq &
\left(\frac{|G|}{\omega(\mathcal{O}(B))} \int_{\mathcal{O}(B)} \left|b(y)-b_{\mathcal{O}(B)}\right|^s d \omega(y)\right)^{1 / s^{\prime}}
\left(M ( R_j^\alpha f)^s(x)\right)^{1 / s}\\
\lesssim  &
\|b\|_{\text {BMO}_d}\left(M[ ( R_j^\alpha f)^s](x)\right)^{1 / s}.
\end{align*}
By using \eqref{equival O vollum}, we are going to see
\begin{align*}
&\frac{1}{\omega(B)} \int_B \left|\mathcal C_2(f)(y)-(\mathcal C_2(f))_B\right | d \omega(y)\\
\lesssim &  \frac{1}{\omega(B)} \int_B \int_{\mathcal{O}(5B)}
\frac{d(y, z)^\alpha}{w(B(y, d(y, z))} \left|b(z)-b_{\mathcal{O}(B)}\right||f(z)| d \omega(z) d \omega(y)\\
 = &  \frac{1}{\omega(B)} \int_{\mathcal{O}(5B)} \left|b(z)-b_{\mathcal{O}(B)}\right| |f(z)|
\int_B\frac{d(y, z)^\alpha}{\omega(B(y, d(y, z))} d \omega(y)d \omega(z).
\end{align*}

Observe that for any $y \in B=B(x_0, r)=\{y:\|y-x_0\|<r\}$, we have
 $d(y, z)<6 r$ and then $y \in\{w: d(w, z)<6 r\}$, i.e.,
$B(x_0, r) \subseteq \mathcal{O}(B(z, 6 r))$. Thus, by \eqref{ballmeasuresim},
\begin{align*}
& \int_B \frac{d(y, z)^\alpha}{\omega(B (y, d(y, z)))} d \omega(y) \\
\leq & \int_{\mathcal{O}(B(z, 6 r))} \frac{d(y, z)^\alpha}{\omega(B(y, d(y , z))} d \omega(y) \\
 \leq & \sum_{i \leq 0} \int_{2^{i-1}\cdot 6r\leq d(y, z)<2^{i}\cdot 6r} \frac{d(y, z)^\alpha}{\omega(B(y, 2^{i}\cdot 6r))}\left(\frac{2^{i}\cdot 6r}{d(y, z)}\right)^{\mathbf{N}} d \omega(y) \\
\leq& \sum_{i\leq 0}(2^{i} \cdot 6 r)^\alpha \int_{d(y, z)<2^{i} \cdot 6 r} \frac{|G|}{\omega(\mathcal{O}(B(z, 2^{i} \times 6 r)))}\left(\frac{2^{i} \cdot 6 r}{2^{i-1} \cdot 6 r}\right)^{\mathbf{N}} d \omega(y) \\
\lesssim & \sum_{i\leq 0} 2^{i \alpha} r^\alpha
\lesssim  \omega(B(x_0, r))^{\alpha / \mathbf{N}}.
\end{align*}
 Then we can estimate
\begin{align*}
&\frac{1}{\omega(B)} \int_B \left|\mathcal C_2(f)(y)-(\mathcal C_2(f))_B\right | d \omega(y)\\
\leq & \frac{\omega(\mathcal{O}(5B))^{1 / s'}}{\omega(B)^{1-\alpha/\mathbf{N}}}
\left(\frac{1}{\omega(\mathcal{O}(5B))}
\int_{\mathcal{O}(5B)} |b(z)-b_{\mathcal{O}(B)}|^{s'} d \omega(z)\right)^{1 / s^{\prime}}
\left(\int_{\mathcal{O}(5 B)}|f(z)|^s d \omega(z)\right)^{1 / s}\\
\lesssim & \|b\|_{\mathrm{BMO}_d}
\left(\frac{1}{\omega(\mathcal{O}(5B))^{1-\alpha s / \mathbf{N}}}
\int_{\mathcal{O}(5B)} |f(z)|^s d \omega(z)\right)^{1 / s}\\
=& \|b\|_{\text {BMO}_d} \left(\frac{1}{\omega\big(\bigcup_{\sigma \in G}B(\sigma(x_0),5r)\big)^{1-\alpha s / \mathbf{N}}}
\int_{\bigcup_{\sigma \in G}B(\sigma(x_0),5r)}
 |f(z)|^s d \omega(z)\right)^{1 / s}\\
=&\|b\|_{\text {BMO}_d} \left(\sum_{\sigma \in G}\frac{1}{\omega(B(\sigma(x_0),5r))^{1-\alpha s / \mathbf{N}}}
\int_{B(\sigma(x_0),5r)}
 |f(z)|^s d \omega(z)\right)^{1 / s}\\
\leq & |G|^{1/s}
\|b\|_{\text {BMO}_d}
\left(M ^{\alpha s}( f)^s(x)\right)^{1 / s}.
\end{align*}

As for the last term, we have
\begin{align*}
&\frac{1}{\omega(B)} \int_B \left|\mathcal C_3(f)(y)-(\mathcal C_3(f))_B\right | d \omega(y)\\
\leq & \frac{2}{\omega(B)} \int_B  \left|\mathcal C_3(f)(y)-\mathcal C_3(f)(x_0)\right| d \omega(y)\\
\lesssim & \frac{1}{\omega(B)} \int_B \int_{\mathbb{R}^N \backslash \mathcal{O}(5 B)} \left|R_j^\alpha(\xi, y)-R_j^\alpha(\xi, x_0)\right||f(\xi)|
\left|b(\xi)-b_{ \mathcal{O}(B)}\right| d \omega(\xi) d \omega(y) \\
 \lesssim & \frac{1}{\omega(B)} \int_B \int_{\mathbb{R}^N \backslash \mathcal{O}(5 B)}
  \frac{\|y-x_0\|}{\|\xi-x_0\|}
  \frac{d(\xi, x_0)^\alpha}{\omega(B(\xi, d(\xi, x_0)))}
  |f(\xi)|\left |b(\xi)-b_{ \mathcal{O}(B)}\right | d \omega(\xi) d \omega(y)\\
 \lesssim & \frac{r}{\omega(B)} \int_B
 \left(\int_{\mathbb{R}^N \backslash \mathcal{O}(5 B)}
 \frac{1}{d(\xi, x_0) \omega(B(\xi, d(\xi, x_0)))}  \left|b(\xi)-b_{ \mathcal{O}(B)}\right|^{s^{\prime}} d \omega(\xi)\right)^{1 / s^{\prime}} \\
& \qquad \qquad \cdot \left(\int_{\mathbb{R}^N \backslash \mathcal{O}(5 B)} \frac{d(\xi, x_0)^{\alpha s}}{d(\xi, x_0) \omega(B(\xi, d(\xi, x_0)))}|f(\xi)|^s d \omega(\xi)\right)^{1 / s} d \omega(y).
\end{align*}
We have
\begin{align*}
&\int_{\mathbb{R}^N \backslash \mathcal{O}(5 B)} \frac{d(\xi, x_0)^{\alpha s}}{d(\xi, x_0) \omega(B(\xi, d(\xi, x_0)))}|f(\xi)|^s d \omega(\xi)\\
\leq &  \sum_{i \geq 0} \int_{2^{i}\cdot 5r\leq d(\xi, x_0)<2^{i+1}\cdot 5r} \frac{d(\xi, x_0)^{\alpha s}}{d(\xi, x_0) \omega(B(\xi, 2^{i+1}\cdot 5r))}\left(\frac{2^{i+1}\cdot 5r}{d(\xi, x_0)}\right)^{\mathbf{N}} |f(\xi)|^s d \omega(\xi) \\
\leq &  \sum_{i \geq 0} \int_{ d(\xi, x_0)<2^{i+1}\cdot 5r} \frac{|G|(2^{i+1}\cdot 5r)^{\alpha s}} {(2^{i}\cdot 5r) \omega(B(x_0, 2^{i+1}\cdot 5r))}\left(\frac{2^{i+1}\cdot 5r}{2^{i}\cdot 5r}\right)^{\mathbf{N}} |f(\xi)|^s d \omega(\xi) \\
\lesssim &  \sum_{i \geq 0} 2^{-i}r^{-1}\frac{\omega\big(\mathcal{O}(B(x_0, 2^{i+1}\cdot 5r))\big)^{\alpha s/ \mathbf{N}}} { \omega\big(\mathcal{O}(B(x_0, 2^{i+1}\cdot 5r))\big)}
\int_{ d(\xi, x_0)<2^{i+1}\cdot 5r} |f(\xi)|^s d \omega(\xi) \\
\lesssim & r^{-1} \sum_{i \geq 0} 2^{-i} M^{\alpha s}(f^s)(x)\\
\lesssim & r^{-1} M^{\alpha s}(f^s)(x).
\end{align*}
We also have
\begin{align*}
\int_{\mathbb{R}^N \backslash \mathcal{O}(5 B)}
 \frac{1}{d(\xi, x_0) \omega(B(\xi, d(\xi, x_0)))} \left|b(\xi)-b_{ \mathcal{O}(B)}\right|^{s^{\prime}} d \omega(\xi)
 \lesssim  r^{-1}\|b\|_{\mathrm{BMO}_d}^{s'}.
\end{align*}
Thus, we have
\begin{align*}
&\frac{1}{\omega(B)} \int_B \left|\mathcal C_3(f)(y)-(\mathcal C_3(f))_B\right | d \omega(y)\\
 \lesssim & \frac{r}{\omega(B)}
 \int_B \left(r^{-1}\|b\|_{\mathrm{BMO}_d}^{s'}\right)^{1 / s^{\prime}}
 \left(r^{-1} M^{\alpha s}(f^s)(x)\right)^{1 / s} d \omega(y)\\
 =&\|b\|_{\mathrm{BMO}_d} \left( M^{\alpha s}(f^s)(x)\right)^{1 / s}.
\end{align*}
Then, we conclude that
\begin{align*}
\left([ b, R_j^\alpha] f\right)^{\#}(x)
\lesssim \|b\|_{\mathrm{BMO}_d}\left(\left( M[(R_j^{\alpha}f)^s] (x)\right)^{1 / s}+\big ( M^{\alpha s}(f^s)(x)\big)^{1 / s}\right).
\end{align*}
Since $\displaystyle \frac{p}{s}>1$ and $\displaystyle\frac{s}{q}=\frac{s}{p}-\frac{\alpha s}{ \mathbf{N}}$, we have
$$
\left\|[ b, R_j^\alpha] f\right\|_q\leq \left\| \left([ b, R_j^\alpha] f\right)^{\#}\right\|_q
\lesssim \|b\|_{\mathrm{BMO}_d}\|f\|_p.
$$

\smallskip

\subsection{Lower bound of $[ b, R_j^\alpha]$} \label{Seclowerbound}
We first give the following lemma to provide an estimate for the
kernel of $R_j^\alpha$ in \eqref{kerneldefi}.
We borrow this idea from \cite[Theorem 1.2]{HLLW} and omit the proof here.
\begin{lemma}\label{kernellowerbound}
For $j=1,2, \ldots, N$ and for every ball $B=B(x_0, r) \subseteq \mathbb{R}^N$, there is another ball $\tilde{B}=B(y_0, r)$
such that $\|x_0-y_0\|=5 r$, and that for every $(x, y) \in B \times \tilde{B}$,
$$
\left|R_j^{\alpha}(x, y)\right| \gtrsim \frac{r^{\alpha}}{\omega(B(x_0, r))}.
$$
\end{lemma}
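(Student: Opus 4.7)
The plan is to work directly from the integral representation
\begin{equation*}
R_j^{\alpha}(x,y)=-\frac{C_\alpha}{2}(y_j-x_j)\int_0^\infty h_t(x,y)\,t^{(\alpha-3)/2}\,dt
\end{equation*}
established in \eqref{kerneldefi}, combined with the Gaussian lower bound \eqref{gaussian_lower} for $h_t$. The task is to pick $\tilde B$ so that on $B\times\tilde B$: (i) the prefactor $y_j-x_j$ has a fixed sign and satisfies $|y_j-x_j|\gtrsim r$, so that no cancellation can occur inside the positive integral; (ii) the Euclidean distance $\|x-y\|$ is comparable to $r$; and (iii) the Dunkl ball volumes appearing in the lower bound are all comparable to $\omega(B(x_0,r))$.

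Concretely, I would set $y_0:=x_0+5r\,e_j$, where $e_j$ is the $j$-th standard basis vector. Then $\|x_0-y_0\|=5r$ as required, and for every $(x,y)\in B\times\tilde B$
\begin{equation*}
y_j-x_j=5r+\bigl(y_j-(y_0)_j\bigr)-\bigl(x_j-(x_0)_j\bigr)\ \geq\ 5r-r-r=3r>0,
\end{equation*}
so that (i) holds with a definite sign. The triangle inequality gives $3r\leq\|x-y\|\leq 7r$, which is (ii). For (iii), the doubling property yields $\omega(B(x,r))\approx\omega(B(x_0,r))$ for every $x\in B$, and since $\|x_0-y_0\|=5r\approx r$, \eqref{equival vollum} gives $\omega(B(y_0,r))\approx\omega(B(x_0,r))$, hence $\omega(B(y,r))\approx\omega(B(x_0,r))$ as well.

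With these preparations, I would restrict the time integral to $t\in[r^2,2r^2]$. On this band $\sqrt t\approx r$, so by (iii) $\min\{\omega(B(x,\sqrt t)),\omega(B(y,\sqrt t))\}\approx\omega(B(x_0,r))$, and by (ii) the exponential factor $\exp(-c\|x-y\|^2/t)\geq\exp(-49c)$ is a harmless positive constant. Applying \eqref{gaussian_lower}, the problem reduces to the elementary
\begin{equation*}
\int_{r^2}^{2r^2} t^{(\alpha-3)/2}\,dt\ \approx\ r^{\alpha-1},
\end{equation*}
valid uniformly in $0<\alpha<N$. Multiplying by the lower bound $|y_j-x_j|\gtrsim r$ and the volume factor $1/\omega(B(x_0,r))$ produces exactly $r^\alpha/\omega(B(x_0,r))$.

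The main (mild) obstacle is item (i): for a generic choice of $y_0$ the difference $y_j-x_j$ could change sign as $(x,y)$ ranges over $B\times\tilde B$, which would allow cancellation inside the positive kernel $h_t$ and destroy the lower bound; the axial displacement $y_0=x_0+5r\,e_j$ is engineered precisely to pin the sign down. A secondary point is verifying the volume comparison $\omega(B(y_0,r))\approx\omega(B(x_0,r))$ when $y_0$ may be much closer to (or farther from) reflection hyperplanes than $x_0$; this is controlled by \eqref{ballmeasuresim}, since shifting the center by $5r$ alters each weight factor $|\langle\alpha,\cdot\rangle|+r$ by at most a bounded multiplicative constant depending only on the root system.
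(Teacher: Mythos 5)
Your proof is correct, and it reconstructs the argument that the paper deliberately omits (the paper only points to \cite{HLLW}, Theorem~1.2 for this lemma). The three ingredients you isolate --- displacing the center of $\tilde B$ along the $e_j$-axis so that $y_j-x_j\geq 3r$ with fixed sign, the comparability of $\|x-y\|$ and of the relevant ball volumes with $r$ and $\omega(B(x_0,r))$, and then applying the Gaussian lower bound \eqref{gaussian_lower} to the integral formula \eqref{kerneldefi} restricted to the band $t\in[r^2,2r^2]$ --- are exactly the standard mechanism for such pointwise lower bounds, and the computation $\int_{r^2}^{2r^2}t^{(\alpha-3)/2}\,dt\approx r^{\alpha-1}$ closes it. Note also that your choice $y_0=x_0+5re_j$ yields precisely the sign-definiteness of $R_j^{\alpha}(x,y)$ on $B\times\tilde B$ that the paper uses immediately after the lemma in Section~\ref{Seclowerbound}, so the construction is consistent with what follows. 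One small wording quibble: since $(y_j-x_j)$ is a scalar prefactor sitting \emph{outside} the (manifestly positive) time integral, there is no cancellation issue ``inside'' it to worry about; the real point is just that $|y_j-x_j|$ is bounded below by $\approx r$, which is what your choice of $y_0$ ensures. The volume comparison $\omega(B(y_0,r))\approx\omega(B(x_0,r))$ via \eqref{ballmeasuresim} or \eqref{equival vollum} is handled correctly.
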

\begin{definition}
Let $b$ be finite almost everywhere on $\mathbb{R}^N$.
For $B \subseteq \mathbb{R}^N$ with $\omega(B)<\infty$,
we define a median value $m_b(B)$ of $b$ over $B$ to be a real number satisfying
$$
\omega\left(\left\{x \in B: b(x)>m_b(B)\right\}\right) \leq \frac{1}{2} \omega(B) \quad {\rm{ and }} \quad \omega\left(\left\{x \in B: b(x)<m_b(B)\right\}\right) \leq \frac{1}{2} \omega(B) .
$$
\end{definition}
For given $b \in L_{\operatorname{loc}}^1(\mathbb{R}^N, d \omega)$ and for any ball $B$, the oscillation $\Omega(b, B)$ is defined by
$$
\Omega(b, B):=\frac{1}{\omega(B)} \int_B|b(x)-b_B| d \omega(x).
$$
Let $B_0:=B(x_0,r)$ be any ball centred at $x_0$ with radius $r>0$ and containing the point $0$.
Then, we choose $\tilde{B}_0=B\left(\tilde{x}_0, r\right)$ with $\|\tilde{x}_0-x_0\|=5r$ such that $y_j-x_j \geq r$ and $\|x-y\| \approx r$ for $x \in B_0$ and $y \in \tilde{B}_0$.
From the expression of $R_j^{\alpha}$ in \eqref{kerneldefi}, it implies that $R_j^{\alpha}(x,y)$ does not change sign for any $(x,y)\in B_0 \times \tilde{B}_0$.

 Now, we choose two measurable sets
$$
E_1 \subseteq\left\{y \in \tilde{B}_0: b(y)<m_b(\tilde{B}_0)\right\} \quad \text { and } \quad E_2 \subseteq \left\{y \in \tilde{B}_0: b(y) \geq m_b(\tilde{B}_0)\right\}
$$
such that $\omega(E_i)=\frac{1}{2} \omega(\tilde{B}_0)$, $i=1,2$, and that $E_1 \cup E_2=\tilde{B}_0$, $E_1 \cap E_2=\emptyset$.
Moreover, we define
$$
B_1:=\left\{x \in B_0: b(x) \geq m_b(\tilde{B}_0)\right\} \quad \text { and } \quad B_2:=\left\{x \in B_0: b(x) \leq m_b(\tilde{B}_0)\right\}
$$

Now based on the definitions of $E_i$ and $B_i$, we have
\begin{align*}
& b(x)\geq m_b (\tilde{B}_0 ) > b(y), \quad(x, y) \in B_{1} \times E_{1} ;\\
& b(x)\leq m_b (\tilde{B}_0 ) \leq b(y), \quad(x, y) \in B_{2} \times E_{2}.
\end{align*}
Thus, for all $(x, y) \in B_i \times E_i, i=1,2$, we have that $b(x)-b(y)$ does not change sign and that
\begin{align*}
|b(x)-b(y)| & =\left|b(x)-m_b(\tilde{B}_0)+m_b(\tilde{B}_0)-b(y)\right| \\
& =\left|b(x)-m_b(\tilde{B}_0)\right|+\left|m_b(\tilde{B}_0)-b(y)\right| \geq  \left|b(x)-m_b(\tilde{B}_0)\right|.
\end{align*}
It is easy to check that
$$
\Omega(b, B_0)\leq
\frac{2}{\omega(B_0)} \int_{B_0} \left|b(x)-m_b(\tilde{B}_0)\right| d \omega(x).
$$
Let  $f_i=\mathsf 1_{E_i}$ for $i=1,2$. Since $\|\tilde{x}_0-x_0\|=5r$, we have $\omega(B_0)\approx \omega(\tilde{B}_0)$. By Lemma \ref{kernellowerbound}, we have
\begin{align*}
& \frac{r^{-\alpha}}{\omega(B_0)} \sum_{i=1}^2 \int_{B_0}  \left|[b, R_j^{\alpha}] f_i(x)\right| d \omega(x) \\
\geq &  \frac{r^{-\alpha}}{\omega(B_0)} \sum_{i=1}^2 \int_{B_i} \left|[b, R_j^{\alpha}] f_i(x)\right| d \omega(x)\\
=& \frac{r^{-\alpha}}{\omega(B_0)} \sum_{i=1}^2 \int_{B_i} \int_{E_i}|b(x)-b(y)| \left|R_j^{\alpha}(x, y)\right| d \omega(y) d \omega(x) \\
\gtrsim & \frac{r^{-\alpha}}{\omega(B_0)} \sum_{i=1}^2 \int_{B_i} \left|b(x)-m_b(\tilde{B}_0)\right| \frac{r^{\alpha}}{\omega(B_0)}
\int_{E_i} d \omega(y) d \omega(x) \\
\gtrsim & \frac{1}{\omega(B_0)} \sum_{i=1}^2 \int_{B_i} \left|b(x)-m_b(\tilde{B}_0)\right| d \omega(x) \\
\gtrsim &\left |\Omega(b, B_0)\right| .
\end{align*}
Next, from H\"{o}lder's inequality and the boundedness of $[b, R_j^\alpha]$, we deduce that
\begin{align*}
&\frac{r^{-\alpha}}{\omega(B_0)} \sum_{i=1}^2 \int_{B_0} \left|[b, R_j^{\alpha}] f_i(x)\right| d \omega(x)\\
\lesssim & \frac{r^{-\alpha}}{\omega(B_0)} \sum_{i=1}^2
\left(\int_{B_0}   \left|[b, R_j^{\alpha}] f_i(x)\right|^q d \omega(x)\right)^{1 / q} \omega(B_0)^{1 / q^{\prime}} \\
\lesssim & r^{-\alpha}\sum_{i=1}^2  \left\|[b, R_j^{\alpha}]\right\|_{L^p(\mathbb{R}^N, d\omega) \rightarrow L^q (\mathbb{R}^N, d\omega)} \omega(E_i)^{1 / p} \omega(B_0)^{-1 / q} .
\end{align*}
Since $0\in B_0$, then $\|x_0-0\|=\|x_0\|<r$. Note that $\big\|\frac{x_0}{\|x_0\|} -0\big\|=1$.
 By \eqref{growth}, the scaling property and \eqref{equival vollum}, we have
\begin{align*}
&\omega(B_0)=\omega(B(x_0,r))
\lesssim \omega \left(B(x_0,\|x_0\|)\right) \Big(\frac{r}{\|x_0\|}\Big)^{\mathbf{N}}\\
=&\omega\Big (B\Big (\frac{x_0}{\|x_0\|},1\Big )\Big )\, \|x_0\|^{\mathbf{N}}\Big(\frac{r}{\|x_0\|}\Big)^{\mathbf{N}}\approx
r^{\mathbf{N}}\omega(B(0,1)).
\end{align*}
Combining with $\omega(B_0)\approx \omega(\tilde{B}_0)\geq \omega(E_i)$, we have
\begin{align*}
|\Omega(b, B)|
\lesssim & r^{-\alpha}  \left\|[b, R_j^{\alpha}]\right\|_{L^p(\mathbb{R}^N, d\omega) \rightarrow L^q (\mathbb{R}^N, d\omega)} \omega(B_0)^{1 / p-1 / q}\\
\lesssim & \omega(B(0,1))^{\alpha / \mathbf{N}} \left\|[b, R_j^{\alpha}]\right\|_{L^p(\mathbb{R}^N, d\omega) \rightarrow L^q (\mathbb{R}^N, d\omega)}.
\end{align*}
The proof is complete.

\medskip

\section{Proof of Theorem \ref{commutatorcompact}}\label{proofcomucomp}
In this section, we prove the sufficiency of the compactness via adapting the idea from \cite{ChenDuongLi} via verifying the precompactness argument, that is, a version of Riesz--Kolmogorov theorem on space of homogeneous type. For necessity, we borrow the idea in \cite{LaceyLi} to the Dunkl setting.

It follows from \cite{ChenDuongLi} that the $\mathrm{VMO}_d(\mathbb{R}^N)$ are
equivalent to the closure of the set $\Lambda_{d, 0}(\mathbb{R}^N)$,
 the Lipschitz function space with the compact support, under the norm of the $\mathrm{BMO}_d$ space on the spaces of homogeneous
 type $(\mathbb{R}^N, d\omega)$.

\smallskip

\subsection{Sufficiency}
 By a density argument, to prove that when $b \in \mathrm{VMO}_d(\mathbb{R}^N)$, the commutator $[b, R_j^{\alpha}]$ is compact from $L^p(\mathbb{R}^N)$ to $L^q(\mathbb{R}^N)$, it suffices to show that $[b, R_j^{\alpha}]$ is compact for $b \in \Lambda_{d, 0}(\mathbb{R}^N)$.

A set $E$ is precompact if its closure is compact. Then, for $b \in \Lambda_{d, 0}(\mathbb{R}^N)$, to show $[b, R_j^{\alpha}]$ is compact from $L^p(\mathbb{R}^N)$ to $L^q(\mathbb{R}^N)$, it suffices to show that for every bounded subset $E \subseteq L^p(\mathbb{R}^N)$, the set $[b, R_j^{\alpha}] E$ is precompact on $L^q(\mathbb{R}^N)$.

Recall that the Riesz--Kolmogorov theorem (see for example \cite[Theorem 1]{GorkaMacios}) provides a common way to check precompactness. 
\begin{theorem} \label{RieszKolmogorov}{\rm{(}}\cite{GorkaMacios}{\rm{)}} Let $\mu$ be a doubling measure such that
$$
h(r):=\inf \big\{\mu(B(x, r)): x \in X\big\}>0 \quad \text { for each}\  r>0
$$
and assume $1<q<\infty$. Let $x_0 \in X$, then the subset $E$ of $L^q(X, \mu)$ is relatively compact if and only if the following conditions are satisfied:
\begin{itemize}
\item[$(a)$] $E$ is bounded;
\item[$(b)$]
$\displaystyle
\lim _{R \rightarrow \infty} \int_{X \backslash B(x_0, R)}|g(x)|^q d \mu(x)=0
$  uniformly for $g \in E$;

\item[$(c)$]
$\displaystyle
\lim _{r \rightarrow 0} \int_X|g(x)-g_{B(x, r)}|^q d \mu(x)=0
$ uniformly for  $g\in E$.
\end{itemize}
\end{theorem}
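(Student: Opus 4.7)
The plan is to prove both directions of the equivalence separately, treating sufficiency as the substantial part. For the ``only if'' direction, (a) follows because relatively compact subsets of any metric space are bounded. For (b) and (c), I would exploit total boundedness: given $\varepsilon>0$, cover $E$ by finitely many $L^q$-balls $B(g_1,\varepsilon/2),\dots,B(g_n,\varepsilon/2)$ with centres $g_i\in L^q(X,\mu)$. For (b), since each fixed $g_i\in L^q$, dominated convergence applied to $|g_i|^q\mathsf 1_{X\setminus B(x_0,R)}$ gives an $R_i$ with $\|g_i\mathsf 1_{X\setminus B(x_0,R_i)}\|_q<\varepsilon/2$; taking $R=\max_iR_i$ and using the triangle inequality yields (b) uniformly for $g\in E$. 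For (c), the analogous density argument applies: for a fixed $g\in L^q(X,\mu)$ on a doubling space, the averages $A_rg(x):=g_{B(x,r)}$ satisfy $\|A_rg-g\|_q\to 0$ as $r\to 0$ (approximating $g$ in $L^q$ by continuous compactly supported functions, then invoking Lebesgue differentiation plus uniform continuity on compact sets), whence the uniform statement on the finite cover follows.

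For the sufficiency direction, the plan is to show $E$ is totally bounded in $L^q(X,\mu)$. Fix $\varepsilon>0$. Using (b) choose $R$ with $\|g\mathsf 1_{X\setminus B(x_0,R)}\|_q<\varepsilon/3$ uniformly for $g\in E$, and using (c) choose $r$ with $\|g-A_rg\|_q<\varepsilon/3$ uniformly. It thus suffices to prove that the family $\mathcal F_{r,R}:=\{(A_rg)\mathsf 1_{B(x_0,R)}:g\in E\}$ is totally bounded in $L^q(X,\mu)$. The key observation is that $A_r$ smooths: by H\"older's inequality combined with the lower bound $\mu(B(x,r))\ge h(r)>0$, one has the pointwise estimate $\|A_rg\|_\infty\le h(r)^{-1/q}\|g\|_q$, so $\mathcal F_{r,R}$ is uniformly bounded. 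Moreover, for $x,y\in B(x_0,R)$ with $d(x,y)$ small, the oscillation $|A_rg(x)-A_rg(y)|$ can be controlled by $\mu(B(x,r)\triangle B(y,r))/\mu(B(x,r))$ times $\|g\|_q$ up to H\"older factors, and this tends to zero as $d(x,y)\to 0$ uniformly in $g\in E$. Since doubling implies that $\overline{B(x_0,R)}$ is totally bounded as a metric space, Arzel\`a--Ascoli yields precompactness of $\mathcal F_{r,R}$ in $C(\overline{B(x_0,R)})$, which embeds continuously into $L^q(X,\mu)$ after multiplication by the indicator of the bounded set.

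The main obstacle, I anticipate, will be quantifying the equicontinuity of the averaging operator $A_r$ on $L^q$-bounded sets on a general space of homogeneous type. On Euclidean space this is immediate from translation, but on an abstract doubling metric measure space one must show that $\mu\!\left(B(x,r)\triangle B(y,r)\right)\to 0$ uniformly on bounded sets as $d(x,y)\to 0$; this is precisely where the hypothesis $h(r)>0$ is essential, together with doubling. I would argue by the inclusions $B(x,r)\subseteq B(y,r+d(x,y))$ and the annular bound on $\mu(B(y,r+\delta)\setminus B(y,r-\delta))$ in terms of the doubling constant raised to a power controlled by $\log_2((r+\delta)/(r-\delta))$. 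Combining this annular estimate with a H\"older application then converts the pointwise oscillation bound on $A_rg$ into the desired $L^q$-equicontinuity of $\mathcal F_{r,R}$, closing the Arzel\`a--Ascoli argument and completing the proof.
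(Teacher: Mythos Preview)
The paper does not prove this theorem at all: it is quoted verbatim from G\'orka--Macios \cite{GorkaMacios} and used as a black box to verify compactness of the commutator. There is therefore nothing in the paper to compare your argument against.

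That said, your sketch is broadly the standard route, but two points deserve care. First, in the ``only if'' direction for (c) you invoke density of continuous compactly supported functions in $L^q(X,\mu)$; on an abstract metric measure space this is not automatic and requires something like local compactness or at least that Lipschitz functions with bounded support are dense --- on a doubling space this can be arranged, but it is not free. Second, and more seriously, in the sufficiency direction you appeal to Arzel\`a--Ascoli on $\overline{B(x_0,R)}$. Doubling of $\mu$ does imply that $(X,d)$ is metrically doubling, hence bounded sets are totally bounded, but total boundedness is not compactness unless $X$ is complete, and the hypotheses do not include completeness. The cleanest fix is to avoid Arzel\`a--Ascoli altogether: once you have the uniform modulus of continuity for the family $\{A_rg:g\in E\}$ (your annular bound via doubling and $h(r)>0$ is correct), cover $B(x_0,R)$ by finitely many balls of small radius, observe that on each such ball every $A_rg$ is within $\varepsilon$ of its value at the centre, and discretise the (uniformly bounded) range to produce a finite $\varepsilon$-net in $L^q$ directly. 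This bypasses both the completeness issue and the question of whether $A_rg$ is genuinely continuous.
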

Now, we only need to show that $[b, R_j^{\alpha}] E$ satisfies conditions (a)--(c) of Theorem \ref{RieszKolmogorov}.
First, by Theorem \ref{commutatorbound} and the fact that $b \in \mathrm{BMO}_d(\mathbb{R}^N)$, it is direct to see that $[b, R_j^{\alpha}] E$ satisfies the condition (a).

Let's verify the condition (b). We may assume that $b \in \Lambda_{d, 0}(\mathbb{R}^N)$ with $\operatorname{supp} b \subseteq \mathcal{O}(B(x_0, R))$, $x_0 \in \mathbb{R}^N$. For $t>2$, set $K^c:=\{x \in \mathbb{R}^N: d(x, x_0)>t R\}$. Then we have
\begin{align*}
\left\|[b, R_j^{\alpha}] f\right\|_{L^q(\mathbb{R}^N \backslash \mathcal{O}(B(x_0, t R)), d \omega)}
\leq  \left\|b(R_j^{\alpha} f)\right\|_{L^q(K^c, d \omega)}
       +\left\|R_j^{\alpha}(b f)\right\|_{L^q(K^c, d \omega)}.
\end{align*}
For any $y\in K^c$, we have $d(y,x_0)>R$ and then $y \notin \mathcal{O}(B(x_0, R))$. Then we have
$$
 \left\|b(R_j^{\alpha} f)\right\|_{L^q(K^c, d \omega)}^q
 =\int_{d(y, x_0)>t R} \left|b(y) R_j^{\alpha}(f)(y)\right|^q d \omega(y)=0.
$$
By using \eqref{sizecondi} and the fact that if $d(x,y)\approx d(x,x_0)$, then $\omega(B(x, d(x , y))) \approx \omega(B(x_0, d(x , y))) \approx \omega(B(x_0, d(x , x_0)))$, we have
\begin{align*}
&\left\|R_j^{\alpha}(b f)\right\|_{L^q(K^c, d \omega)}^{q}\\
\leq & \int_{d(x, x_0)>t R}
\left(\int_{\mathcal{O}(B(x_0,  R))}\left|R_j^{\alpha}(x, y)\right||b(y)|\,|f(y)| d \omega(y)
\right)^q d \omega(x)\\
\lesssim & \int_{d(x, x_0)>t R}
\left(\int_{d(y, x_0)<R}\frac{d(x, y)^\alpha}{\omega(B(x, d(x , y)))}|b(y)|\, |f(y)| d \omega(y)
\right)^q d \omega(x)\\
\leq & \int_{d(x, x_0)>t R}\frac{d(x, x_0)^{\alpha q}}{\omega(B(x_0, d(x , x_0)))^q}
\left(\int_{d(y, x_0)<R}|b(y)|\, |f(y)| d \omega(y)
\right)^q d \omega(x)\\
\leq & \int_{d(x, x_0)>t R}\frac{d(x, x_0)^{\alpha q}}{\omega(B(x_0, d(x , x_0)))^q}
\left(\int_{d(y, x_0)<R}|b(y)|^{p'}d \omega(y)
\right)^{q/p'}
\|f\|_{p}^{q} d \omega(x)\\
\leq & \|f\|_{p}^{q} \,\|b\|_{\infty}^{q} \omega(\mathcal{O}(B(x_0, R)))^{q/p'}
\int_{d(x, x_0)>t R}\frac{d(x, x_0)^{\alpha q}}{\omega(B(x_0, d(x, x_0)))^q}
d \omega(x).
\end{align*}
Since
\begin{align*}
 &
\int_{d(x, x_0)>t R}\frac{d(x, x_0)^{\alpha q}}{\omega(B(x_0, d(x, x_0)))^{q}}
d \omega(x)\\
\leq & \sum_{i \geq 0}\int_{2^{i}tR\leq d(x, x_0)<2^{i+1}tR}
\frac{d(x, x_0)^{\alpha q}}{\omega\left(B(x_0, 2^{i+1}tR)\right)^q}
\left(\frac{2^{i+1}tR }{d( x_0,x)}\right)^{\mathbf{N}q}d \omega(x)\\
\lesssim & |G| \sum_{i \geq 0}
\left(2^{i+1}tR\right)^{\alpha q}\omega\left(B(x_0, 2^{i+1}tR)\right)^{-q+1}\\
\lesssim & \sum_{i \geq 0} 2^{i(\alpha q -\mathbf{N}(q-1))} t^{\alpha q -\mathbf{N}(q-1)} R^{\alpha q-\mathbf{N}(q-1)}\\
\lesssim & t^{-q\mathbf{N}/p'} R^{-q\mathbf{N}/p'}.
\end{align*}
Thus, we have
\begin{align*}
 \left\|[b, R_j^{\alpha}] f\right\|_{L^q((\mathcal{O}(B(x_0, t R)))^c, d \omega)}
  \lesssim  &  \left(\|f\|_{p}^{q}\, \|b\|_{\infty}^{q} \, \omega(\mathcal{O}(B(x_0, R)))^{q/p'} t^{-q\mathbf{N}/p'} R^{-q\mathbf{N}/p'} \right)^{1/q}\\
  \lesssim & \|f\|_{p} \,\|b\|_{\infty} \,\omega(\mathcal{O}(B(x_0, R)))^{1/p'} R^{-\mathbf{N}/p' }t^{-\mathbf{N}/p'},
\end{align*}
which tends to $0$, as  $t\rightarrow \infty$.

It remains to consider the condition (c).  Let $\varepsilon$ be a fixed constant in $(0,1/4)$. Then, we choose $r$ sufficiently small such that $r<\varepsilon^2$.
For the ball $ B(x,r)$, we have
\begin{align*}
 & \int_{\mathbb{R}^N}  \left|[b,R_j^{\alpha}]f(x)- \left([b,R_j^{\alpha}]f\right)_{B(x, r)}\right|^q d \omega(x)\\
 = &  \int_{\mathbb{R}^N}\bigg|\frac{1}{\omega(B(x,r))}\int_{B(x,r)}
  \left(   [b,R_j^{\alpha}]f(x)-[b,R_j^{\alpha}]f(z)\right) d\omega(z)\bigg|^q d \omega(x).
\end{align*}
For any $x\in \mathbb{R}^N$ and $z\in B(x,r)$, we split
\begin{align*}
&[b,R_j^{\alpha}]f(x)-[b,R_j^{\alpha}]f(z) \\
 = & \int_{\mathbb{R}^N} R_j^{\alpha}(x, y)\left(b(x)-b(y)\right) f(y) d \omega(y)
    -\int_{\mathbb{R}^N} R_j^{\alpha}(z, y)  \left(b(z)-b(y)\right) f(y) d \omega(y) \\
= & \int_{d(x, y)>\varepsilon^{-1}\|x-z\|} R_j^{\alpha}(x, y)\left(b(x)-b(z)\right) f(y) d \omega(y) \\
&+ \int_{d(x, y)>\varepsilon^{-1}\|x-z\|} \left(R_j^{\alpha}(x, y)-R_j^{\alpha}(z, y)\right) \left( b(z)-b(y)\right) f(y) d \omega(y) \\
& +\int_{d(x, y) \leq \varepsilon^{-1}\|x-z\|} R_j^{\alpha}(x, y)\left(b(x)-b(y)\right) f(y) d \omega(y) \\
& -\int_{d(x, y) \leq \varepsilon^{-1}\|x-z\|} R_j^{\alpha}(z, y) \left(b(z)-b(y)\right) f(y) d \omega(y) \\
=: & \mathcal D_1(f)(x,z)+\mathcal D_2(f)(x,z)+\mathcal D_3(f)(x,z)+\mathcal D_4(f)(x,z).
\end{align*}
Note that for any $z\in B(x,r)$, we have $\|z-x\|<\varepsilon^2$ by the assumed $r<\varepsilon^2$. Since $b \in \Lambda_{d, 0}(\mathbb{R}^N)$, we have $$|b(x)-b(z)|<\|b\|_{\Lambda_{d, 0}(\mathbb{R}^N)}d(x,z)\lesssim \|x-z\|<\varepsilon^2.$$
Now, we begin with estimating $\mathcal D_1(f)(x,z)$. From \eqref{improesti}, we have
\begin{align*}
  | \mathcal D_1(f)(x,z)| \leq & \int_{d(x, y)>\varepsilon^{-1}\|x-z\|}
    \frac{d(x,y)^{\alpha}}{\omega(B(x,d(x,y)))}|b(x)-b(z)| |f(y)| d \omega(y)\\
   \leq & \varepsilon^2 \int_{d(x, y)>\varepsilon^{-1}\|x-z\|}
    \frac{d(x,y)^{\alpha}}{\omega(B(x,d(x,y)))}|f(y)| d \omega(y)\\
   \leq & \varepsilon^2 \int_{\mathbb{R}^{N}}
    \frac{d(x,y)^{\alpha}}{\omega(B(x,d(x,y)))}|f(y)| d \omega(y)\\
   \lesssim &  \varepsilon \|f\|_p^{-\frac{p}{q}+1}
\Big( \sum_{\sigma \in G}Mf(\sigma(x))\Big)^{\frac{p}{q}}.
\end{align*}
Next, for $\mathcal D_2(f)(x,z)$, since $\|x-y\|\geq d(x,y)>\varepsilon^{-1}\|x-z\|$, then we have $\|x-z\|/\|x-y\|<\varepsilon$. By \eqref{smoothcondi2}, we have
\begin{align*}
  | \mathcal D_2(f)(x,z)| \lesssim & \|b\|_{\infty} \int_{d(x, y)>\varepsilon^{-1}\|x-z\|}
   \frac{\|z-x\|}{\|x-y\|} \frac{d(x,y)^{\alpha}}{\omega(B(x,d(x,y)))} |f(y)| d \omega(y)\\
   \leq & \varepsilon  \int_{\mathbb{R}^{N}}
    \frac{d(x,y)^{\alpha}}{\omega(B(x,d(x,y)))}|f(y)| d \omega(y)\\
   \lesssim &  \varepsilon \|f\|_p^{-\frac{p}{q}+1}
\Big( \sum_{\sigma \in G}Mf(\sigma(x))\Big)^{\frac{p}{q}}.
\end{align*}
For $\mathcal D_3(f)(x,z)$, since $d(x, y)\leq \varepsilon^{-1}\|x-z\|<\varepsilon^{-1}\varepsilon^{2}=\varepsilon$, we also have
\begin{align*}
  | \mathcal D_3(f)(x,z)| \lesssim & \|b\|_{\Lambda_{d, 0}(\mathbb{R}^N)}
   \varepsilon^{-1}\|x-z\|
   \int_{d(x, y)\leq \varepsilon^{-1}\|x-z\|}
    \frac{d(x,y)^{\alpha}}{\omega(B(x,d(x,y)))} |f(y)| d \omega(y)\\
   \lesssim & \varepsilon  \int_{\mathbb{R}^{N}}
    \frac{d(x,y)^{\alpha}}{\omega(B(x,d(x,y)))}|f(y)| d \omega(y)\\
   \lesssim &  \varepsilon \|f\|_p^{-\frac{p}{q}+1}
\Big( \sum_{\sigma \in G}Mf(\sigma(x))\Big)^{\frac{p}{q}}.
\end{align*}
For $\mathcal D_4(f)(x,z)$, since $d(x,y)\leq \varepsilon^{-1}\|x-z\|$ and $\varepsilon\in (0,1/4)$, we have $d(z,y)\leq d(z,x)+d(x,y) \leq 5\varepsilon^{-1}\|x-z\|/4$. Then we have
\begin{align*}
  | \mathcal D_4(f)(x,z)| \lesssim & \|b\|_{\Lambda_{d, 0}(\mathbb{R}^N)}
   \varepsilon^{-1}\|x-z\|
   \int_{d(z, y)\leq 5\varepsilon^{-1}\|x-z\|/4}
    \frac{d(z,y)^{\alpha}}{\omega(B(z,d(z,y)))} |f(y)| d \omega(y)\\
   \lesssim & \varepsilon  \int_{\mathbb{R}^{N}}
    \frac{d(z,y)^{\alpha}}{\omega(B(z,d(z,y)))}|f(y)| d \omega(y)\\
   \lesssim & \varepsilon \|f\|_p^{-\frac{p}{q}+1}
(\textstyle\sum_{\sigma \in G}Mf(\sigma(z)))^{\frac{p}{q}}.
\end{align*}

Thus, we can estimate
\begin{align*}
 & \int_{\mathbb{R}^N}  \left|[b,R_j^{\alpha}]f(x)-\left([b,R_j^{\alpha}]f\right)_{B(x, r)}\right|^q d \omega(x)\\
  \lesssim &   \int_{\mathbb{R}^N}\bigg\{\frac{1}{\omega(B(x,r))}\int_{B(x,r)}
  \varepsilon \|f\|_p^{1-\frac{p}{q}}
  \Big[ \Big(\sum_{\sigma \in G}Mf(\sigma(x))\Big)^{\frac{p}{q}}+\Big(\sum_{\sigma \in G}Mf(\sigma(z))\Big)^{\frac{p}{q}} \Big]d\omega(z)\bigg\}^q d \omega(x)\\
  \lesssim &  \|f\|_p^{q-p}\varepsilon^q \int_{\mathbb{R}^N}\bigg\{
 \Big [\Big(\sum_{\sigma \in G}Mf(\sigma(x))\Big)^{\frac{p}{q}}
  +M\Big(\Big(\sum_{\sigma \in G}Mf\Big)^{\frac{p}{q}}\Big)(x)\Big ]\bigg\}^q d \omega(x)\\
  \lesssim &  \|f\|_p^{q-p}\|f\|_p^{p} \varepsilon^{q}
  =   \|f\|_p^{q}
  \varepsilon^{q} \to0,\quad \text{as}\  \    \varepsilon \rightarrow 0,
\end{align*}
as desired.

Thus, the proof of sufficiency is complete.

\smallskip

\subsection{Necessity}
Suppose that $[b, R_j^{\alpha}]$ is compact from $L^p(\mathbb{R}^N, d \omega)$ to $L^q(\mathbb{R}^N, d \omega)$, then $[b, R_j^{\alpha}]$ satisfies condition (a) in Theorem \ref{RieszKolmogorov}. Therefore, by applying Theorem \ref{commutatorbound}, we have $b \in \mathrm{CBMO}_{\text {Dunkl}}(\mathbb{R}^N)$.

Now, we proceed to prove $b \in \mathrm{CVMO}_{\text {Dunkl}}(\mathbb{R}^N)$.
We will use the method of contradiction outlined in \cite{LaceyLi} to achieve this. Let us assume that $b \notin \mathrm{CVMO}_{\text {Dunkl}}(\mathbb{R}^N)$, then we will check that at least one of the three conditions \eqref{eq(1)}--\eqref{eq(3)} in the definition of $\mathrm{CVMO}_{\text {Dunkl}}(\mathbb{R}^N)$ does not hold. Since similar arguments will work for conditions \eqref{eq(1)}--\eqref{eq(3)}, let us suppose that \eqref{eq(1)} does not hold.

 Suppose that there exists some $\delta_0>0$ and a sequence of balls $\{B_i\}_{i \in I}$ where $B_i:=B(x_0^i,r_i)$ and $0\in B_i$ for each $i$. We also assume that $r_i$ satisfies $r_i \rightarrow 0$ as $i \rightarrow \infty$ and
$$
\frac{1}{\omega(B_i)} \int_{B_i}|b(x)-b_{B_i}| d \omega(x) > \delta_0.
$$

We can choose $\{B_l\}_{l \in I} \subseteq \{B_i\}_{i \in I}$ with
\begin{equation}\label{radius decay}
10 r_{l+1} \leq r_{l}.
\end{equation}

Note that
for each $B_i=B(x_0^i, r_i)$, we choose $\tilde{B}_i=B(y_0^i, r_i)$ such that $\|y_0^i-x_0^i\|=5r$, and for any $(x,y )\in  B_i\times\tilde{B}_i$, we have $y_j-x_j \geq r$ and $\|x-y\| \approx r$. For $\tilde{B}_i$, we can define a median value of $b$ on the such a ball $\tilde{B}_i$, denoted by $m_b(\tilde{B}_i)$. Then we have two sets below
$$
F_{i, 1} \subseteq \left\{y \in \tilde{B}_i: b(y) \leq m_b (\tilde{B}_i ) \right\}, \quad F_{i, 2} \subseteq \left\{y \in \tilde{B}_i: b(y) \geq m_b (\tilde{B}_i ) \right\},
$$
which have a measure at least $ \omega(\tilde{B}_i)/2$.

Similar to the argument in Section \ref{Seclowerbound}, we also define the sets
$$
E_{i, 1} \subseteq \left\{x \in B_i: b(x) \geq m_b (\tilde{B}_i ) \right\}, \quad
E_{i, 2} \subseteq \left\{x \in B_i: b(x)  <   m_b (\tilde{B}_i ) \right\}.
$$
Then, $B_i=E_{i, 1} \bigcup E_{i, 2}$ and $E_{i, 1} \bigcap E_{i, 2}=\emptyset$. For $(x, y) \in \left(E_{i, 1} \times F_{i, 1} \right)\bigcup \left(E_{i, 2} \times F_{i, 2}\right)$, we also have that $b(x)-b(y)$ does not change sign and
$$
|b(x)-b(y)|\geq \left|b(x)-m_b (\tilde{B}_i )\right |.
$$
Define the following sets
$$
\tilde{F}_{i, 1}:=F_{i, 1} \backslash \bigcup_{l=i+1}^{\infty} \tilde{B}_l \quad  \text{and}\quad  \tilde{F}_{i, 2}:=F_{i, 2} \backslash \bigcup_{l=i+1}^{\infty} \tilde{B}_l \quad \text{for}\   \   i=1,2, \ldots .
$$
Then, it follows from \eqref{radius decay} that
\begin{align*}
\omega(\tilde{B}_i ) \geq
\omega(\tilde{F}_{i, 1})  \geq
\frac{1}{6} \omega(\tilde{B}_i ) \quad
 \text{ and }\quad
\omega(\tilde{B}_i ) \geq
\omega(\tilde{F}_{i, 2})  \geq \frac{1}{6} \omega(\tilde{B}_i ).
\end{align*}
Now, we have
\begin{align*}
\delta_0 < & \frac{1}{\omega(B_i)} \int_{B_i} \left|b(x)-b_{B_i}\right| d \omega(x) \leq \frac{2}{\omega(B_i )} \int_{B_i}\left  |b(x)-m_b (\tilde{B}_i ) \right | d \omega(x) \\
= & \frac{2}{\omega(B_i)} \int_{E_{i, 1}} \left|b(x)-m_b(\tilde{B}_i)\right| d \omega(x)
 +\frac{2}{\omega(B_i )} \int_{E_{i, 2}}  \left|b(x)-m_b (\tilde{B}_i)\right| d \omega(x)
\end{align*}
Then we can deduce that at least one of the following inequalities holds:
$$
\frac{2}{\omega (B_i )} \int_{E_{i, 1}} \left|b(x)-m_b (\tilde{B}_i ) \right| d \omega(x) \geq \frac{\delta_0}{2} , \quad \frac{2}{\omega (B_i )} \int_{E_{i, 2}} \left|b(x)-m_b (\tilde{B}_i ) \right| d \omega(x) \geq \frac{\delta_0}{2}.
$$
Without loss of generality, we may assume that the first one holds. Then we have that
\begin{align*}
\frac{\delta_0}{4} & \leq \frac{1}{\omega (B_i )} \int_{E_{i, 1}} \left|b(x)-m_b (\tilde{B}_i ) \right| d \omega(x) \\
& \lesssim \frac{1}{\omega(B_i )} \frac{1}{\omega (B_i )}
\int_{\tilde{F}_{i, 1}}\int_{E_{i, 1}} \left|b(x)-m_b (\tilde{B}_i ) \right| d \omega(x)d\omega(y) \\
& \lesssim \frac{r_i^{-\alpha}}{\omega (B_i )} \int_{E_{i, 1}}
\int_{\mathbb{R}^N} \frac{r_i^{\alpha}}{\omega (B(x,r_i) )} |b(x)-b(y)|  \mathsf{1}_{\tilde{F}_{i, 1}}(y)  d \omega(y) d \omega(x)\\
& \lesssim \frac{r_i^{-\alpha}}{\omega (B_i )^{1/p'}} \int_{E_{i, 1}}
\bigg| [b, R_j^{\alpha}]\,
\bigg (\frac{\mathsf{1}_{\tilde{F}_{i, 1}}}{\omega (B_i )^{1/p}}\bigg )\,(x) \bigg|d \omega(x)\\
& \lesssim \frac{r_i^{-\alpha}}{\omega (B_i )^{1/p'}} \omega(E_{i, 1})^{1/q'}
\bigg \|[b, R_j^{\alpha}]\,
 \frac{\mathsf{1}_{\tilde{F}_{i, 1}}}{\omega (B_i )^{1/p}} \bigg \|_{q}\\
& \lesssim
\left\|[b, R_j^{\alpha}]f_i\right\|_{q},
\end{align*}
where $f_i(x) := \mathsf{1}_{\tilde{F}_{i, 1}}(x)\omega (B_i )^{-1/p}$. Note that $f_i$ has disjoint support for different $i$ and $\|f_i\|_p\approx 1$.

Let us consider $\psi$ in the closure of $\left\{[b, R_j^{\alpha}]f_i\right\}_i$, then we have $\|\psi\|_{q} \gtrsim 1$. Now choose a subsequence $\{f_{i_k}\}_i$ such that
\begin{align}\label{psi}
\left\|\psi-[b, R_j^{\alpha}]f_{i_k}\right\|_{q} \leq 2^{-k}.
\end{align}
To complete the proof, we choose a non-negative numerical sequence $\{c_k\}_{k=1}^{\infty}$ such that
$$
\left\{\begin{array}{l}
c_1=1 ; \\
c_k=\mathfrak{n} 2^{-\mathfrak{n}}, \quad 2^{\mathfrak{n}} \leq k \leq 2^{\mathfrak{n}+1}-1
\end{array}\right.
$$
for $\mathfrak{n}=1,2, \ldots$.

By the calculations in \cite{HLLW},
we know that $\displaystyle\|\{c_k\}\|_{l^q},\|\{c_k\}\|_{l^{q^{\prime}}}<\infty$ for $q>1$ but $\displaystyle\|\{c_k\}\|_{l^1}=\infty$.
Moreover, $\|\phi\|_{p} <\infty$ with
$\phi=\sum_{k=1}^{\infty} c_k f_{i_k}$.

For any $\phi \in L^p (\mathbb{R}^N, d \omega )$,
by H\"{o}lder's inequality and \eqref{psi},
we have
\begin{align*}
 &\Big\|\sum_{k\geq 1} c_k \psi- [b, R_j^{\alpha} ] \phi \Big\|_{q}  \\
 \leq & \Big\|\sum_{k\geq 1} c_k \left (\psi- [b, R_j^{\alpha} ] f_{i_k}  \right ) \Big\|_{q} \\
\leq &\|c_k \|_{l^{q^{\prime}}} \Big[\sum_{k\geq 1}  \left \|\psi- [b, R_j^{\alpha} ] f_{i_k}  \right \|_{q}^q \Big]^{1/q} \\
\lesssim & 1 .
\end{align*}

Hence we conclude that $\sum_k c_k \psi \in L^q(\mathbb{R}^N, d \omega)$, but $\sum_k c_k \psi$ is infinite on set of positive measure. This leads to a contradiction. Thus, we complete our proof.

\section*{Acknowledgements}
Yanping Chen was supported by the National Natural Science Foundation of China (Grant numbers [12371092], [12326366] and [12326371]). Liangchuan Wu is supported by NNSF of China (Grant number 12201002), ARC DP220100285, Anhui NSF of China (Grant number 2208085QA03) and Excellent University Research and Innovation Team in Anhui Province (Grant number 2024AH010002).

\end{document}